\let\mathcal\mathscr
\newtheorem{theorem}{Theorem}[section]
\newtheorem{lemma}[theorem]{Lemma}
\newtheorem{corollary}[theorem]{Corollary}
\newtheorem{conjecture}[theorem]{Conjecture}
\theoremstyle{definition}
\newtheorem*{ack}{Acknowledgements}
\newtheorem{remark}[theorem]{Remark}
\newtheorem{example}[theorem]{Example}
\newtheorem{definition}[theorem]{Definition}
\numberwithin{equation}{section} \numberwithin{figure}{section}
\DeclareMathOperator{\Pic}{Pic}
 \DeclareMathOperator{\rank}{rank}
 \DeclareMathOperator{\Val}{Val}
 \DeclareMathOperator{\Index}{Index}
\DeclareMathOperator{\Jac}{Jac}
\newcommand{\SL}{\textrm{SL}}
\newcommand{\kbar}{\overline{k}}
\newcommand{\Adele}{\mathbf{A}}
\newcommand{\LL}{\mathcal{L}}
\newcommand\PP{\mathbb{P}}
\newcommand\ZZ{\mathbb{Z}}
\newcommand\NN{\mathbb{N}}
\newcommand\QQ{\mathbb{Q}}
\newcommand\RR{\mathbb{R}}
\newcommand\GG{\mathbb{G}}
\newcommand\OO{\mathcal{O}}
\renewcommand{\leq}{\leqslant}
\renewcommand{\geq}{\geqslant}
\newcommand{\x}{\mathbf{x}}
\newcommand{\bal}{\boldsymbol{\alpha}}
\renewcommand{\d}{\mathrm{d}}
\title{Varieties with too many rational points}
\author{T.D. Browning}
\address{School of Mathematics\\
University of Bristol\\ Bristol\\ BS8 1TW}
\email{t.d.browning@bristol.ac.uk}
\author{D. Loughran}
\address{
School of Mathematics \\
University of Manchester \\
Oxford Road \\
Manchester \\
M13 9PL}
\email{daniel.loughran@manchester.ac.uk}
\subjclass[2010]
{11D45 (11G35, 14G05,  14J45)}
\begin{document}

\begin{abstract}
	We investigate Fano varieties defined over a number field that
	contain subvarieties whose number of rational points of bounded height is comparable to the total number on the variety.
\end{abstract}

\maketitle
\tableofcontents

\thispagestyle{empty}

\section{Introduction}

This paper is concerned with the growth rate of rational points of bounded height on Fano varieties
defined over a number field, i.e.\ non-singular projective varieties which have an ample anticanonical bundle $\omega_X^{-1}$.
Let $X$ be a Fano variety over a number field $k$ such that
$X(k)$ is dense in $X$ under the Zariski topology,
and let $H$ be an anticanonical height function on $X$.
Then we may define the counting function
$$
N(Z,H,B)=\#\{x\in Z: H(x)\leq B\},
$$
for any $B>0$ and any $Z \subset X(k)$. If $Z \subset X$ is a locally closed subset, we shall write
$N(Z,H,B)$ for $N(Z(k),H,B)$. Manin and his collaborators \cite{FMT89} have formulated a conjecture
on the asymptotic behaviour of such counting functions, as $B \to \infty$.
The Manin conjecture
predicts the existence of an open subset $U\subset X$
such that for any anticanonical height function $H$ on $X$, 
there is
a constant $c_{U,H}>0$
such that
\begin{equation} \label{con}
	N(U,H,B) \sim c_{U,H} B(\log B)^{\rho(X)-1}, \quad \text{as } B \to \infty,
\end{equation}
where $\rho(X)=\rank \Pic(X)$.
In general one needs to look at an open subset in order to
avoid \emph{accumulating subvarieties}.
According to Peyre \cite[D\'ef.~2.4.1]{p251},
a proper irreducible subvariety $Y\subset X$
is said to be {\em strongly accumulating} if there exists a choice
of anticanonical height function $H$ on $X$ such that
for any non-empty
open subset $Y^\circ \subset Y$, there exists a non-empty open subset $U\subset X$ for which
$$
\limsup_{B\rightarrow \infty} \frac{\log N(Y^\circ,H,B)}{\log B}>
\limsup_{B\rightarrow \infty} \frac{\log N(U,H,B)}{\log B}.
$$
The most familiar example comes from non-singular cubic surfaces $X\subset \PP^3$.
If one of the $27$ lines
is defined over $k$, and we denote this line by $Y$, then the left hand side of this inequality is $2$, by work of
Schanuel \cite{Sch79} (see Peyre  \cite[Cor.~6.2.18]{Pey95} for general height functions).
On the other hand, it is expected that the right hand side should be $1$ when $U=X\setminus \{\mbox{27 lines}\}.$

It is known that this conjecture cannot hold for all Fano varieties, as shown by
Batyrev and Tschinkel \cite{BT96} through their analysis of
a certain bundle of cubic surfaces in $\PP^3\times\PP^3$.
Nonetheless the conjecture is expected to hold for a very wide class of Fano varieties.
Moreover, Peyre \cite{Pey95} has conjectured an explicit value $c_{H,\text{Peyre}}$
for the leading constant $c_{U,H}$ appearing in \eqref{con}.
Only very recently have counter-examples to Peyre's conjecture
on the leading constant been found by Le Rudulier \cite{LeR13}.
In this paper we present many more counter-examples to Peyre's conjecture
and describe explicit conditions under which a given
Fano variety provides  a counter-example to Peyre's conjecture.

Our investigation will concentrate on
 Fano varieties  whose  counting functions have the same order of magnitude  as the counting function associated to a proper closed subvariety.
Experimental  investigations into this phenomenon have been carried out  by Elsenhans and Jahnel
 \cite{EJ06} for cubic and quartic threefolds in $\PP^4$,
 and by Elsenhans \cite{elsenhans} for some quadric bundles in $\PP^1\times \PP^3$.

Following  Peyre \cite[D\'ef.~2.4.2]{p251},
a proper irreducible  subvariety $Y\subset X$ is said to be {\em weakly accumulating}
if if there exists a choice of anticanonical height function $H$ on $X$ such that 
for any non-empty open subset $Y^\circ \subset Y$, there exists a non-empty open subset $U\subset X$ for which
$$
\limsup_{B\rightarrow \infty} \frac{N(Y^\circ,H,B)}{N(U,H,B)}>0.
$$
We have found it convenient to refine this definition as follows.

\begin{definition}\label{d}
Let $X$ be a Fano variety over a number field $k$. We shall say that a proper closed
irreducible subvariety  $Y \subset X$ over $k$  is {\em saturated}
if there exists a choice of anticanonical height $H$ on $X$ such that
\begin{equation} \label{def:saturated}
		0<\liminf_{B \to \infty}\frac{N(Y^\circ,H,B)}{B(\log B)^{\rho(X)-1}} < \infty,
\end{equation}
	for every non-empty open subset $Y^\circ\subset Y$.
\end{definition}
Note that standard properties of height functions (see Section  \ref{s:proof})
imply that if $Y \subset X$ is saturated,
then \eqref{def:saturated} holds for every choice of anticanonical height function.
Assuming the truth of \eqref{con}, moreover, it is clear that any
saturated subvariety for $H$ is  weakly accumulating for $H$.
A geometric analogue of this phenomenon  has also been  investigated in recent work of  Hassett, Tanimoto and Tschinkel
\cite{sho}.

Our first result on saturated subvarieties is the following.

\begin{theorem}\label{thm:WA1}
	Let $X$ be a Fano variety over a number field $k$ and let $Y \subset X$ be a saturated
	closed subset over $k$. 
	Then for any open subset $U \subset X$ which meets  $Y$ and any $A \geq 0$, there
    exists a choice of anticanonical height function $H_A$ on $X$ such that
    $$\liminf_{B \to \infty}\frac{N(U,H_A,B)}{B(\log B)^{\rho(X)-1}} > (1 + A) c_{H_A,\textrm{Peyre}}.$$
\end{theorem}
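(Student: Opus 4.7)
The plan is to construct $H_A$ by locally perturbing a fixed anticanonical height $H$ on $X$ at a single place $v_0$ of $k$: enlarge the local metric on $\omega_X^{-1}$ by a factor $t \geq 1$ on a small open neighborhood $V$ of $Y(k_{v_0})$, leaving it untouched elsewhere. This divides the height of every $k$-rational point of $Y$ by $t$, and saturation then forces $N(U, H_A, B)$ to absorb the $Y$-contribution at height $tB$ into the window of height $B$, inflating the count by a factor of $t$. Because the metric change is supported on $V$ and $Y(k_{v_0})$ has $v_0$-adic Tamagawa measure zero, we can choose $V$ small enough that Peyre's constant increases by only a small additive amount.

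Concretely, fix $H$ coming from an adelic metric on $\omega_X^{-1}$. By the remark following Definition~\ref{d}, since $Y$ is saturated and $U \cap Y$ is a nonempty open subset of $Y$, there exists $c_Y > 0$ with
\begin{equation*}
\liminf_{B \to \infty} \frac{N(U \cap Y, H, B)}{B(\log B)^{\rho(X)-1}} \geq c_Y.
\end{equation*}
For a place $v_0$, an open neighborhood $V$ of $Y(k_{v_0})$ in $X(k_{v_0})$, and $t \geq 1$, choose a continuous function $\phi_{V,t}\colon X(k_{v_0}) \to [1,t]$ equal to $t$ on an open subset of $V$ containing $Y(k_{v_0})$ and equal to $1$ off $V$. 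Let $H_{V,t}$ be the anticanonical height obtained from $H$ by multiplying only the $v_0$-component of its metric by $\phi_{V,t}$. For every $x \in (U\cap Y)(k)$, we have $H_{V,t}(x) = H(x)/t$, so $N(U, H_{V,t}, B) \geq N(U\cap Y, H, tB)$, and saturation yields $\liminf_{B\to\infty} N(U,H_{V,t},B)/(B(\log B)^{\rho(X)-1}) \geq c_Y t$.

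For Peyre's constant, only the $v_0$-factor of the adelic Tamagawa measure changes, being multiplied pointwise by $\phi_{V,t}$ (the local measure scales linearly with the local norm on $\omega_X^{-1}$). Using $\phi_{V,t} \leq 1 + (t-1)\mathbf{1}_V$ and integrating over the Brauer--Manin set $X(\Adele_k)^{\Br}$ (which is independent of $H$), one obtains
\begin{equation*}
c_{H_{V,t},\textrm{Peyre}} \leq c_{H,\textrm{Peyre}} + K(t-1)\tau_{H,v_0}(V)
\end{equation*}
for a finite constant $K$ depending only on $X$ and $H$. Since $Y(k_{v_0})$ has $\tau_{H,v_0}$-measure zero as a lower-dimensional subvariety, outer regularity allows us to pick $V$ with $K\tau_{H,v_0}(V) < c_Y/(2(1+A))$; then any $t > 2(1+A)c_{H,\textrm{Peyre}}/c_Y$ gives $c_Y t > (1+A)c_{H_{V,t},\textrm{Peyre}}$, so $H_A := H_{V,t}$ does the job.

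The principal technical step is the scaling analysis of Peyre's constant under this one-place metric perturbation: one needs the local Tamagawa measure at $v_0$ to scale linearly with the $v_0$-norm on $\omega_X^{-1}$, and the Brauer--Manin restriction not to interfere. Both follow from unpacking Peyre's definitions, but the Brauer--Manin contribution in particular requires care, since the adelic Tamagawa measure does not factor through the projection to $X(k_{v_0})$; here the uniform bound $\phi_{V,t} \leq 1 + (t-1)\mathbf{1}_V$ is what allows the additive change in $c_{\textrm{Peyre}}$ to be controlled purely in terms of the $v_0$-volume of $V$.
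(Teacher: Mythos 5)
Your proof is correct, but it follows a genuinely different route from the paper's. The paper first proves the intermediate Theorem~\ref{thm:WA}, which requires only $Y^\circ(k)\neq\emptyset$ rather than saturation: at one place $w$ it builds the metric on $\omega_X^{-1}$ from global sections $s_0,\dots,s_m$ of $\OO_X(b)$ with $s_0$ vanishing on $Y$, inserting a scaling weight $\lambda$ on the $s_0$-term inside the $\max$; this leaves the height on $Y(k)$ literally unchanged for every $\lambda$, while dominated convergence forces the local Tamagawa mass $\tau_{w,\lambda}(X(k_w))$, and hence $c_{H_\lambda,\mathrm{Peyre}}$, to zero as $\lambda\to\infty$, and Theorem~\ref{thm:WA1} is then deduced by multiplying with the saturation bound. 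You instead rescale the metric at $v_0$ by a continuous bump $\phi_{V,t}\in[1,t]$ supported in a small neighbourhood $V$ of $Y(k_{v_0})$; this divides the height of every point of $Y(k)$ by $t$, inflating the $Y$-count by the factor $t$, while $c_{H_{V,t},\mathrm{Peyre}}$ stays under control because $\tau_{v_0}(Y(k_{v_0}))=0$ and outer regularity lets you make $\tau_{v_0}(V)$ as small as you like. The two constructions push in opposite directions -- the paper freezes the $Y$-count and shrinks the Tamagawa constant, you inflate the $Y$-count and hold the constant -- and yours has the virtue of replacing the algebraic ingredient (a global section of $\OO_X(b)$ vanishing on $Y$) by a purely measure-theoretic one. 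The trade-off is that your argument uses saturation in an essential way: the lower bound $c_Y t$ collapses if $Y^\circ(k)$ is merely a nonempty finite set, so you do not recover the more general Theorem~\ref{thm:WA}. One small inaccuracy: Peyre's constant integrates the Tamagawa measure over the closure $\overline{X(k)}\subset X(\Adele_k)$, not over the Brauer--Manin set; your argument only needs that this set is a fixed subset of $X(\Adele_k)$ independent of the metric, which is true of $\overline{X(k)}$, so the slip is harmless.
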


The conclusion of this result is that Manin's conjecture with Peyre's constant cannot hold for such open subsets
with respect to every choice of anticanonical height function.
Note that a similar phenomenon occurs in a paper of
Swinnerton-Dyer \cite[p.~376]{swd94}. Namely, Swinnerton-Dyer remarks that if $X$ fails 
weak approximation then the definition of the leading constant in Manin's conjecture needs to take
this into account, since one can  otherwise modify the leading constant (by changing the local metrics
at the places where weak approximation fails)  without changing the counting function.

From Theorem \ref{thm:WA1}, we obtain the following immediate corollary.

\begin{corollary}\label{cor:WA2}
	Let $X$ be a Fano variety over a number field $k$ and suppose that the union of the
	saturated subvarieties $Y \subset X$ over $k$ is Zariski dense in $X$. Then for any non-empty
    open subset $U \subset X$ and any $A \geq 0$ there exists a choice of anticanonical height function $H_A$
    on $X$ such that
    $$\liminf_{B \to \infty}\frac{N(U,H_A,B)}{B(\log B)^{\rho(X)-1}} > (1 + A) c_{H_A,\textrm{Peyre}}.$$
\end{corollary}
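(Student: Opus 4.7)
The plan is to deduce the corollary directly from Theorem \ref{thm:WA1}, essentially by translating the Zariski density hypothesis into the statement that every non-empty open subset of $X$ meets some saturated subvariety. Once this is done, there is no further work to do.

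More precisely, let $U \subset X$ be a non-empty open subset and set $Z = X \setminus U$, which is a proper closed subset of $X$. Since by hypothesis the union of the saturated closed subvarieties $Y \subset X$ is Zariski dense in $X$, this union cannot be contained in the proper closed subset $Z$. Hence there must exist at least one saturated subvariety $Y_0 \subset X$ with $Y_0 \not\subset Z$, and consequently $Y_0 \cap U \neq \emptyset$; that is, $U$ meets $Y_0$.

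At this point I would simply invoke Theorem \ref{thm:WA1} applied to the pair $(U, Y_0)$ together with the given value of $A \geq 0$. This yields an anticanonical height function $H_A$ on $X$ for which
$$\liminf_{B \to \infty}\frac{N(U,H_A,B)}{B(\log B)^{\rho(X)-1}} > (1+A)\, c_{H_A,\textrm{Peyre}},$$
which is exactly the desired conclusion. There is no real obstacle: with Theorem \ref{thm:WA1} in hand the corollary is a purely formal consequence, and the only mild point is the observation that a (possibly infinite) family of subvarieties whose union is Zariski dense must meet every non-empty open set, since otherwise the union would be contained in a proper closed subset of $X$.
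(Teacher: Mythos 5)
Your proof is correct and matches the paper's intent: the paper labels this an ``immediate corollary'' of Theorem \ref{thm:WA1} without spelling out the argument, and the elementary topological observation you supply (a Zariski dense union of closed subvarieties cannot lie inside the proper closed complement $X \setminus U$, so some saturated $Y_0$ meets $U$) is precisely what makes Theorem \ref{thm:WA1} applicable.
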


For the varieties covered by Corollary \ref{cor:WA2}, this shows that one cannot remove a closed subset and
hope for Manin's conjecture to hold with Peyre's constant
on the corresponding open subset, uniformly
with respect to every anticanonical height function. In particular, Peyre's conjecture
fails for such varieties. We shall present many examples to which this corollary applies.

Peyre was the first to suggest \cite[\S 8]{Pey03} a ``fix'' to
Manin's conjecture, to make it compatible with the counter-example 
of Batyrev and Tschinkel,
by asking  that one remove  a \emph{thin}, rather than a closed, subset of $X$.
The investigations in  \cite[\S 6]{elsenhans} and 
 \cite{LeR13} are also compatible with this.
Let $X$ be a variety over a number field $k$.
According to Serre  \cite[\S 3.1]{Ser08}, a subset $Z \subset X(k)$ is said to be \emph{thin} if it is
a finite union of subsets which are either contained in a proper closed subvariety of $X$, or contained 
in some $\pi(Y(k))$ where $\pi: Y \to X$ is a generically finite dominant morphism 
of degree exceeding $1$, with $Y$ irreducible.

The examples in our paper are compatible with the following conjecture.

\begin{conjecture} \label{conj:Manin}
	Let $X$ be a Fano variety over a number field $k$ such that $X(k) \neq \emptyset$.
	Then there exists a thin subset $Z \subset X(k)$ such that
	for every anticanonical height function $H$ on $X$ we have
	$$N(X(k) \setminus Z, H,B) \sim c_{H,\textrm{Peyre}}B(\log B)^{\rho(X) - 1}, \quad \text{as } B \to \infty.$$
\end{conjecture}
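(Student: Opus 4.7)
The statement is a conjecture rather than a theorem, so the most honest ``proof proposal'' is a description of what a proof strategy would look to cover the known and expected contributions to over-counting. The basic plan is to isolate two kinds of pathological loci inside $X(k)$ whose contribution to $N(X(k),H,B)$ is of order at least $B(\log B)^{\rho(X)-1}$, verify that their union is thin in the sense of Serre, and then argue that removing them restores the Peyre asymptotic. The first kind consists of the closed accumulating subvarieties (in particular the saturated ones produced by Theorem \ref{thm:WA1} and Corollary \ref{cor:WA2}); since these are finite unions of proper closed subvarieties of $X$, they contribute a thin set of the first type. The second kind consists of the images $\pi(Y(k))$ of generically finite covers $\pi\colon Y\to X$ of degree $>1$ which force extra points via Hilbert irreducibility failures — this is precisely the mechanism behind the Batyrev--Tschinkel example and the Le Rudulier counter-examples.

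For the first step I would classify, for a given $X$, the set of saturated subvarieties. A natural guess is that $Y\subset X$ is saturated only if it itself is Fano (or at least has $-K_Y$ big enough) and carries ``too many'' rational points relative to $(K_X)|_Y$; once such $Y$ are identified, their union should be Zariski closed (for reasonable $X$), so removing them removes a closed, hence thin, subset. For the second step, I would enumerate the generically finite covers $\pi\colon Y\to X$ for which the pullback $\pi^*\omega_X^{-1}$ still makes $Y$ sufficiently positive that $N(\pi(Y(k)),H,B)$ is comparable to $B(\log B)^{\rho(X)-1}$; the requirement $\deg \pi >1$ combined with Hilbert irreducibility should place $\pi(Y(k))$ inside a thin set. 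Having fixed the candidate thin $Z$, the remaining task is to show
\[
N(X(k)\setminus Z, H, B) \sim c_{H,\mathrm{Peyre}}\, B(\log B)^{\rho(X)-1}.
\]

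The essential obstacle, and the reason this is only a conjecture, is that no general technique is known for proving Manin's conjecture; proofs exist only case-by-case (toric varieties, certain equivariant compactifications, some low-degree hypersurfaces) and almost always only after removing a Zariski-closed, not merely thin, set. Upgrading these results to handle removal of a thin set is already non-trivial in the established cases, because one must combine the established asymptotic with a separate upper-bound sieve showing that thin sets contribute negligibly once the Peyre constant is computed honestly. A realistic program would therefore be (i) to verify the conjecture for each of the specific families in which Manin is currently known, by proving the requisite sieve bound on thin subsets, and (ii) to revisit the Batyrev--Tschinkel and Le Rudulier examples and exhibit an explicit thin set $Z$ for which the asymptotic with $c_{H,\mathrm{Peyre}}$ holds. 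The truly hard part is (i): bounding $N(\pi(Y(k)),H,B)$ uniformly over all generically finite covers $\pi$ of bounded degree, which is essentially a quantitative Hilbert irreducibility statement with anticanonical height and the correct Peyre-type normalisation. I would not expect to resolve this in full generality, but only to test the conjecture on families where both sides of the comparison can be computed.
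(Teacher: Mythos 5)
This statement is labelled as a conjecture in the paper, and the paper offers no proof of it; it is proposed as a ``fix'' to Manin's conjecture in the spirit of Peyre, and the body of the paper only provides evidence of compatibility (Lemmas \ref{lem:compatible} and \ref{lem:3folds_thin}, and the discussion following Theorem \ref{thm:bundle}). You correctly recognise this, and your proposal is an honest and reasonable sketch of the landscape — identifying the two sources of thin sets (closed accumulating loci and images of generically finite covers), pointing to Hilbert irreducibility as the governing mechanism, and flagging the genuine obstacle that no general method for proving Manin-type asymptotics exists and that even in known cases one removes a Zariski-closed rather than thin set. There is no argument in the paper to compare against, so nothing more needs to be said; your assessment that the conjecture cannot be proved with current technology, only tested on families, matches the paper's own stance.

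One small remark worth making: for the specific families treated in the paper, the compatibility checks do not attempt your step (i) (a quantitative Hilbert irreducibility sieve); they only carry out your step (ii) for the relevant examples, namely exhibiting an explicit thin set containing the problematic points (via the non-rational-connectedness of the Fano scheme of lines/conics, or the square condition on $x_0x_1x_2x_3$ in the quadric bundle example). So the paper's evidence is weaker than your proposed programme — it shows the offending points land in a thin set, but does not show the remaining points obey the Peyre asymptotic.
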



It is expected that the set of rational points on a Fano variety $X$ defined over $k$
is not thin as soon as it is non-empty. This follows from work of Ekedahl  \cite{Eke90},
together with Colliot-Th\'{e}l\`{e}ne's conjecture that the Brauer--Manin obstruction 
controls  weak approximation for $X$ (see \cite[\S 3.5]{Ser08}). 

We now indicate the contents of this paper. In Section \ref{s:proof} we recall various properties of height functions and establish our main results.  Then in Sections~\ref{s:ci} and \ref{s:fano} we shall give  applications of Theorem \ref{thm:WA1} and Corollary~\ref{cor:WA2} to complete intersections and Fano threefolds, respectively.
In Example~\ref{ex:Fermat}, for instance,  we will
use Theorem~\ref{thm:WA} to
construct examples of hypersurfaces $X$ of arbitrarily large dimension,
over any number field,
where the presence of
saturated  subvarieties precludes
\eqref{con}  holding with $U=X$
and $c_{U,H} = c_{H,\text{Peyre}}$.
Finally, in Section \ref{s:bi} we will apply Corollary \ref{cor:WA2} to
some quadric bundles in biprojective space, where we answer a question of Colliot-Th\'{e}l\`{e}ne.
We explain along the way how these results are compatible with Conjecture \ref{conj:Manin}.

\begin{ack}
While working on this paper the first author
was  supported by ERC grant \texttt{306457}.
The authors would like to thank C\'{e}cile Le Rudulier for her help with formulating
Conjecture \ref{conj:Manin}, together with Lee Butler,  Ulrich Derenthal and the anonymous referee  
for some helpful comments. 
\end{ack}

\section{Proof of the main results}\label{s:proof}

Theorem \ref{thm:WA1} is a simple consequence of the following result.

\begin{theorem} \label{thm:WA}
    Let $X$ be a Fano variety over a number field $k$, let $H$ be an anticanonical
	height function on $X$ and let $Y \subset X$ be a proper closed subvariety of $X$ over $k$.
	Let $Y^{\circ} \subset Y$ be an open subset with $Y^{\circ}(k) \neq \emptyset$.
	Then for any $A \geq 0$ there
    exists a choice of anticanonical height function $H_A$ on $X$ such that
    $$\liminf_{B \to \infty}\frac{N(Y^{\circ},H_A,B)}{N(Y^{\circ},H,B)} > (1 + A) c_{H_A,\textrm{Peyre}}.$$
\end{theorem}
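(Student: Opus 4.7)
My strategy is to construct $H_A$ by perturbing the local metric of $H$ at a single place $v$ of $k$, in such a way that heights of points of $Y^\circ(k)$ are left unchanged while the Tamagawa volume at $v$ shrinks drastically, so that Peyre's constant becomes arbitrarily small.

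Fix any place $v$ of $k$. Since $Y$ is a proper closed subvariety of the smooth projective variety $X$, the subset $Y(k_v)$ is closed of measure zero in $X(k_v)$ with respect to the local Tamagawa measure $\d\mu_v$ induced by $H$. For any $\varepsilon>0$ and $\mu>1$, I would choose an open neighbourhood $\Omega_v$ of $Y(k_v)$ in $X(k_v)$ with $\mu_v(\Omega_v)<\varepsilon$, a smaller open neighbourhood $\Omega_v^0\subset\Omega_v$ of $Y(k_v)$, and a continuous function $\phi_v\colon X(k_v)\to[1/\mu,1]$ with $\phi_v\equiv 1$ on $\Omega_v^0$ and $\phi_v\equiv 1/\mu$ outside $\Omega_v$. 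A smooth bump function works at an archimedean $v$, while at a finite $v$ one can take $\phi_v$ locally constant on a clopen refinement. Replacing $\|\cdot\|_v$ by $\phi_v\cdot\|\cdot\|_v$ at $v$ and leaving the local metrics at all other places unchanged yields a new adelic metric on $\omega_X^{-1}$, hence an anticanonical height function $H_A$.

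The first key step is to observe that $H_A$ agrees with $H$ on $Y^\circ(k)$: for $x\in Y^\circ(k)$ the image $x_v\in Y(k_v)\subset\Omega_v^0$, so $\phi_v(x_v)=1$ and $H_A(x)=H(x)$. Consequently $N(Y^\circ,H_A,B)=N(Y^\circ,H,B)$ for every $B$, and
\[
\liminf_{B\to\infty}\frac{N(Y^\circ,H_A,B)}{N(Y^\circ,H,B)}=1.
\]
The second step is to control Peyre's constant. Since the local Tamagawa measure scales with the local metric on $\omega_X^{-1}$---as one checks by writing $\d\mu_v$ in local coordinates via a non-vanishing section of $\omega_X$---the new measure at $v$ equals $\phi_v\,\d\mu_v$, so
\[
\tau_v^{\mathrm{new}}=\int_{X(k_v)}\phi_v\,\d\mu_v\leq\mu_v(\Omega_v)+\frac{1}{\mu}\tau_v^{\mathrm{old}}<\varepsilon+\frac{\tau_v^{\mathrm{old}}}{\mu}.
\]
All other constituents of Peyre's constant (the effective cone volume $\alpha^*(X)$, the Brauer--Manin term $\beta(X)$, the local densities at $w\neq v$, and the convergence factors) are independent of this modification, so $c_{H_A,\mathrm{Peyre}}=c_{H,\mathrm{Peyre}}\cdot\tau_v^{\mathrm{new}}/\tau_v^{\mathrm{old}}$ can be made arbitrarily small by choosing $\varepsilon$ small and $\mu$ large.

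Given $A\geq 0$, I would then pick $\varepsilon$ and $\mu$ so that $c_{H_A,\mathrm{Peyre}}<1/(1+A)$, which immediately yields
\[
\liminf_{B\to\infty}\frac{N(Y^\circ,H_A,B)}{N(Y^\circ,H,B)}=1>(1+A)\,c_{H_A,\mathrm{Peyre}}.
\]
The main obstacle I anticipate is verifying the scaling relation $\d\mu_v^{\mathrm{new}}=\phi_v\,\d\mu_v^{\mathrm{old}}$ between the local metric on $\omega_X^{-1}$ and the local Tamagawa measure, and ensuring that the perturbed collection of local norms really gives an admissible adelic metric; both are standard but must be handled cleanly. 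Constructing $\phi_v$ with the desired support properties is routine in both the archimedean and non-archimedean cases.
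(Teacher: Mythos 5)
Your proof is correct in its essentials and takes a genuinely different route from the paper's. The paper fixes an embedding $X \subset \PP^n$ by a power $\omega_X^{-a}$, chooses a global section $s_0$ of $\OO_X(b)$ vanishing on $Y$, and builds an explicit one‑parameter family of ``max''‑type $w$‑adic metrics in which the coordinate containing $s_0$ is scaled by a large parameter $\lambda$; this raises the height everywhere except on $\{s_0=0\}\supset Y$, and a dominated‑convergence argument shows $\tau_{w,\lambda}(X(k_w))\to 0$. Your construction instead fixes a place $v$, covers $Y(k_v)$ by a small open set $\Omega_v$, and scales the metric by a bump function $\phi_v$ that equals $1$ near $Y(k_v)$ and $1/\mu$ outside $\Omega_v$. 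Both modify a single local metric so that $H_A = H$ on $Y^\circ(k)$ while driving the local Tamagawa volume to $0$. Your version is more flexible and avoids the $ab$‑th‑root bookkeeping needed to make the paper's ``max'' expression homogeneous of the right degree on $\omega_X^{-1}$; the paper's version is more explicitly algebraic and plugs directly into the circle‑method heuristics discussed later in Section~\ref{s:ci}. One point in your write‑up deserves correction: the identity $c_{H_A,\mathrm{Peyre}} = c_{H,\mathrm{Peyre}}\cdot\tau_v^{\mathrm{new}}/\tau_v^{\mathrm{old}}$ is not quite right, because $\overline{X(k)}\subset X(\Adele_k)$ need not be a product set and $\phi_v$ is not constant, so the measure of $\overline{X(k)}$ does not rescale by a single ratio. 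What one actually uses — exactly as the paper does — is the inequality $\tau^{\mathrm{new}}(\overline{X(k)})\le \tau^{\mathrm{new}}(X(\Adele_k))$, together with the fact that $\tau^{\mathrm{new}}(X(\Adele_k))$ factors through the places and carries the factor $\tau_v^{\mathrm{new}}(X(k_v))$, which you have shown can be made arbitrarily small. With that replacement your argument is complete.
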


Under the assumptions of Theorem \ref{thm:WA1}, let $Y^\circ = U \cap Y$.
Then Theorem \ref{thm:WA1} is easily deduced from Theorem \ref{thm:WA} on recalling Defintion \ref{d} and noting that
$$
\liminf_{B \to \infty}\frac{N(U,H_A,B)}{B(\log B)^{\rho(X) -1}}
> \liminf_{B \to \infty} \left( \frac{N(Y^{\circ},H_A,B)}{N(Y^{\circ},H,B)} 
\cdot \frac{N(Y^{\circ},H,B)}{B(\log B)^{\rho(X) -1}} \right).
$$

The idea behind the proof of Theorem \ref{thm:WA}
is to build a varying family of height functions $H_\lambda$ for $\lambda > 0$ such that
$H_\lambda(y)=H_1(y)$ for all $y \in Y(k)$, but such that $\lim_{\lambda \to \infty}c_{H_\lambda,\text{Peyre}} = 0.$
Before we carry this plan out, we recall various facts which we shall
need about height functions (see \cite{CLT10} or \cite{Sal98} for proofs).

\subsection{Adelic metrics and height functions}
Let $X$ be a smooth projective variety over a number field $k$ and let $L$ be a line bundle on $X$.
For a place $v$ of $k$, a $v$-adic metric on $L$ is a map which associates to every point $x_v \in X(k_v)$ a function
$\|\cdot\|_v:L(x_v) \mapsto \RR_{\geq 0}$ on the fibre of $L$ above $x_v$ such that
\begin{enumerate}
   		\item For all $\ell \in L(x_v)$, we have $\|\ell\|_v =0$ if and only if $\ell=0$.
   		\item For all $\mu \in k_v$ and $\ell \in L(x_v)$, we have $\|\mu \ell\|_v=|\mu|_v \|\ell\|_v$.
	    \item For any open subset $U \subset X$ and any local section $s \in H^0(U,L)$,
	 the function given by $x_v \mapsto \|s(x_v)\|_v$ is continuous in the $v$-adic topology.
\end{enumerate}
Any model of $(X,L)$ over the ring of integers $\mathfrak{o}_k$ of $k$
naturally gives rise to a $v$-adic
metric on $L$ for each non-archimedean place $v$ of $k$
(see e.g. \cite[\S 	2]{CLT10} or \cite[Def.~2.9]{Sal98}). We then
define an \emph{adelic metric} on $L$ to be  a collection $\|\cdot\|=\{\|\cdot\|_v\}_{v \in \Val(k)}$ of $v$-adic metrics
for each place $v \in \Val(k)$, such that all but finitely many of the $\|\cdot\|_v$ are defined by a single model of $(X,L)$ over $\mathfrak{o}_k$.
We denote the associated adelically metrised line bundle by $\LL=(L,\|\cdot\|)$.
To such an adelic metric one may associate a height function, given by
$$H_{\LL}(x) = \prod_{v \in \Val(k)}\|s(x)\|^{-1}_v,$$
where $s$ is any local section of $L$ defined and non-zero at $x \in X(k)$. This definition is independent of $s$ by
the product formula. We define the class of anticanonical height functions on $X$ to be the class of
height functions associated to a choice of adelic metric on the anticanonical bundle $\omega_X^{-1}$.
If $\LL=(L,\|\cdot\|)$ is an adelically metrised line bundle with $L$ \emph{ample}, then the height function $H_{\LL}$
satisfies the key property that the set
$$\{ x \in X(k): H_{\LL}(x) \leq B\}$$
is finite,
for any $B >0.$
 In this case the counting function
$$
N(Z,H_\LL,B)=\#\{x\in Z: H_\LL(x)\leq B\}
$$
is well-defined 
for any $B > 0$ and any subset $Z\subset X(k)$.
Moreover, if $\LL_1$ and $\LL_2$ are any two adelically metrised line bundles defined on the same
ample line bundle $L$, then the function $H_{\LL_1}/H_{\LL_2}$ on $X(k)$
is bounded (see \cite[Rem.~1.6(b)]{Sal98}). In particular,
$$
\liminf_{B \to \infty}\frac{N(Z,H_{\LL_1},B)}{N(Z,H_{\LL_2},B)}
$$
exists, is finite and non-zero for any subset $Z\subset X(k)$.
We shall regularly use this fact throughout this paper.
It shows that Definition~\ref{d}
is independent of the choice of anticanonical height function, as claimed in the introduction.

\begin{example}
    Consider the ample line bundles
    $\OO_{\PP^n}(d)$ on $\PP^n$, for positive integers $n,d$. Choose a collection of global sections
    $s_0,\dots,s_m \in H^0(\PP^n,\OO_{\PP^n}(d))$ which generate $\OO_{\PP^n}(d)$.
    These may be identified with a collection of homogeneous polynomials of degree $d$ in $(n+1)$ variables,
    whose common zero locus in $\PP^n$ is empty. Then for any place $v$ of $k$, any $x_v \in \PP^n(k_v)$
    and any local section $s$ of $\OO_{\PP^n}(d)$ which is defined
    and non-zero at $x_v$, the expression
    \begin{equation} \label{ex:metric}
    	\|s(x_v)\|_v = \max\left\{ \left|\frac{s_0(x_v)}{s(x_v)}\right|_v,\dots,\left|\frac{s_m(x_v)}{s(x_v)}\right|_v\right\}^{-1},
    \end{equation}
    defines a $v$-adic metric on $\OO_{\PP^n}(d)$. Moreover the collection of these $v$-adic metrics
    defines an adelic metric on $\OO_{\PP^n}(d)$. Using the product formula, one may write the associated height of a point
    $x \in \PP^n(k)$ as
    \begin{equation} \label{ex:height}
    	\prod_{v \in \Val(k)} \max \{ |s_0(x)|_v,\dots,|s_m(x)|_v\}.
    \end{equation}
    Note that on taking $m=n$ and $s_i = x_i^d$ for $i=0,\dots,n$, then the height function (\ref{ex:height}) is simply the $d$-th
    power of the usual height on projective space.
\end{example}

\subsection{Proof of Theorem \ref{thm:WA}}
Let $X$ be a Fano variety over a number field $k$.
By definition, there exists  $a \in \NN$ such that $\omega_X^{-a}$ is very ample. We choose an embedding $X \subset \PP^n$
together with an isomorphism $\omega_X^{-a} \cong \OO_X(1)$.
Next,   let $Y \subset X$ be a proper closed subvariety of $X$ over $k$.
Then we may also choose  $b \in \NN$ together
with a non-zero global section $s_0$ of $H^0(X,\OO_X(b))$ such that
\begin{equation} \label{eqn:Y}
		Y \subset \{s_0 = 0\}.
\end{equation}
Such a global section exists since $Y$ is a \emph{proper} subvariety of $X$
($s_0$ may be identified with a homogeneous polynomial of degree $b$ in $(n+1)$ variables).
Next, choose global sections $s_1,\dots,s_m$ such that
$s_0,\dots,s_m$ generate $\OO_X(b)$ (e.g. they form a basis for $H^0(X,\OO_X(b))$).
We shall now construct a family of metrics on $\omega_X^{-1}$,
defined in a similar manner to the metrics (\ref{ex:metric}).
Fix a place $w$ of $k$. Then for any $\lambda >0$ and any $x_w \in X(k_w)$ we define a $w$-adic metric
on $\omega_X^{-1}$ by
$$\|s(x_w)\|_{w,\lambda} = \max\left\{ \lambda \cdot \left|\frac{s_0(x_w)}{s^{ab}(x_w)}\right|_w,\left|\frac{s_1(x_w)}{s^{ab}(x_w)}\right|_w,
\dots,\left|\frac{s_m(x_w)}{s^{ab}(x_w)}\right|_w\right\}^{-1/ab},$$
for any local section $s$ of $\omega_X^{-1}$ which is defined and non-zero at $x_w$.
Note that here we are identifying $s^{ab}$ with a local section
of $\OO_X(b)$ via 
the choice of isomorphism $\omega_X^{-a} \cong \OO_X(1)$.
It is easy to see that this defines a $w$-adic metric (the exponent $(-1/ab)$ is here to guarantee that
$\|(\mu s)(x_w)\|_{w,\lambda} = |\mu|_w \| s(x_w)\|_{w,\lambda}$ for any $\mu \in k_w$).
For $v \neq w$, we choose 
 fixed $v$-adic metrics $\|\cdot\|_v$ on $\omega_X^{-1}$, almost all of which are defined by a model,
and let $H_\lambda$ denote the height function associated to the corresponding adelic metric. Note that by (\ref{eqn:Y}) we obviously have
\begin{equation} \label{eqn:equal}
	N(Y^{\circ},H_{\lambda},B) = N(Y^{\circ},H_1,B),
\end{equation}
for all open subsets $Y^{\circ} \subset Y$, all $\lambda>0$ and all $B>0$. Peyre's constant (see \cite[D\'{e}f.~2.5, p.~120]{Pey95}) 
takes the form
$$c_{H_\lambda,\text{Peyre}} = \alpha(X)\beta(X) \tau_{\lambda}(\overline{X(k)}).$$
Here $\tau_{\lambda}$ denotes the Tamagawa measure associated to the height function $H_\lambda$
and $\overline{X(k)}$ denotes the closure of $X(k)$ inside $X(\Adele_k)$. Note that $\alpha(X)$ and $\beta(X)$ are independent
of the choice of height function and so do not concern us. Since $\overline{X(k)} \subset X(\Adele_k)$
we have $\tau_{\lambda}(\overline{X(k)}) \leq \tau_{\lambda}(X(\Adele_k))$. Moreover,
if we let $r=\dim X$, then we have
\begin{align*}
\tau_{\lambda}(X(\Adele_k))
=~&|\Delta_k|^{-r / 2}\lim_{s \to 1}( (s-1)^{\rho(X)} L(s,\Pic X_{\kbar}) )\\
&\times
c_w^{-1} \tau_{w,\lambda}(X(k_w))
\prod_{\substack{v \in \Val(F) \\ v \neq w}}c_v^{-1}\tau_{v}(X(k_v)),
\end{align*}
where $\Delta_k$ denotes the discriminant of $k$ and the $c_v$ are a family of ``convergence factors''.
The only part of the above expression which concerns us is $\tau_{w,\lambda}(X(k_w))$. Therefore, by \eqref{eqn:equal}, 
to prove the result it suffices to show that
\begin{equation}	\label{eqn:Tamagawa_goal}
	\tau_{w,\lambda} (X(k_{w})) \to 0 \quad \text{ as } \quad \lambda \to \infty.
\end{equation}
As $X(k_w)$ is compact it may be covered by finitely many $w$-adic
coordinate patches, so in order to prove (\ref{eqn:Tamagawa_goal}) it suffices
to work locally. Let $x_1,\dots,x_r$ be a collection of $w$-adic local coordinates on some
coordinate patch $U \subset X(k_w)$. On $U$ the measure $\tau_{w,\lambda}$ is given by
$\|s\|_{w,\lambda} |\mathrm{d}x_1|_w\dots|\mathrm{d}x_r|_w,$
where
$$s= \frac{\partial}{\partial x_1} \wedge \dots \wedge \frac{\partial}{\partial x_r}.$$
Hence we have
$$
\tau_{w,\lambda}(U) = \int_{x \in U}
\frac{|\mathrm{d}x_1|_w\dots|\mathrm{d}x_r|_w}
{\max\left\{ \lambda\cdot\left|\frac{s_0(x)}{s^{ab}(x)}\right|_{w},\left|\frac{s_1(x)}{s^{ab}(x)}\right|_{w},
\dots,\left|\frac{s_m(x)}{s^{ab}(x)}\right|_{w}\right\}^{1/ab}}.$$
By continuity and the dominated convergence theorem we obtain
\begin{align*}
	\lim_{\lambda \to \infty}\hspace{-0.1cm}
	\tau_{w,\lambda}(U)
	&=\hspace{-0.1cm}
	\int_{x \in U}
	\lim_{\lambda \to \infty} \frac{|\mathrm{d}x_1|_w\dots|\mathrm{d}x_r|_w}
	{\lambda^{1/ab}
	\max\left\{ \left|\frac{s_0(x)}{s^{ab}(x)}\right|_{w}, \frac{1}{\lambda}\cdot \left|\frac{s_1(x)}{s^{ab}(x)}\right|_{w},
	\dots, \frac{1}{\lambda}\cdot\left|\frac{s_m(x)}{s^{ab}(x)}\right|_{w}\right\}^{1/ab}}\\
	&=0,
\end{align*}
since $ab > 0$. This establishes  (\ref{eqn:Tamagawa_goal}) and therefore completes the proof of Theorem \ref{thm:WA}.
\qed

\section{Complete intersections}\label{s:ci}

We now come to some applications of Theorem \ref{thm:WA} to complete intersections in
projective space.
Throughout this section
 $X \subset \PP^n$ is  a Fano complete intersection of $s$  hypersurfaces
of degrees $d_1,\dots,d_s$ over a number field $k$. We shall always assume that $\dim X > 2$.
The condition that $X$ is Fano simply means that
$X$ is non-singular and
 $d < n+1$, where
$$
 d = d_1 + \dots + d_s.
 $$
Under these assumptions we have
$\Pic X \cong \Pic \PP^n \cong \ZZ$ (see  \cite[Ex.~3.1.25]{Lar07})
and
$\omega_X^{-1}=\OO(n+1-d)$ (see \cite[\S II, Ex.~8.4]{Har77}).
In particular, when $X(k)$  is non-empty,  Manin's conjecture predicts the existence of an open subset $U\subset X$
and a constant $c_{U,H}>0$
such that
\begin{equation} \label{conj:Manin_hypersurface}
	N(U,H,B) \sim c_{U,H} B, \quad \text{as } B \to \infty,
\end{equation}
for any anticanonical height function $H$ on $X$.

\subsection{The Hardy--Littlewood circle method}

Let us begin by considering what the circle method has to say about \eqref{conj:Manin_hypersurface} in the
special case  $k=\QQ$.
Let  $\|\cdot\|$ be an arbitrary norm on $\RR^{n+1}.$
In this case one takes the height function
$$H(x) = \|(x_0,\dots,x_n)\|^{n+1-d},$$
on choosing  a representative $x=(x_0:\dots:x_n)$ such that $\x=(x_0,\dots,x_n)$ is a primitive
integer vector.
Birch's work \cite{Bir62}  implies \eqref{conj:Manin_hypersurface} with $U=X$
and $c_{U,H} = c_{H,\text{Peyre}}$
whenever
$d_1=\dots=d_s$ and
$n\geq \sigma+(d-1)2^d$, where $\sigma$ is the affine dimension of Birch's ``singular locus''
and satisfies $\sigma\leq s$
 (note that an easy induction argument shows that
$(d-1)2^d\geq s(s+1)(e-1)2^{e-1}$ when $d=se$).
Birch's result is proved using the circle method.

In Section \ref{s:linear} we will
use Theorem \ref{thm:WA} to
construct examples of complete intersections of arbitrarily large dimension,
over any number field $k$,
where
\eqref{conj:Manin_hypersurface} cannot hold with $U=X$
and $c_{U,H} = c_{H,\text{Peyre}}$, thanks to the presence of
saturated  subvarieties.
A discussion of how the circle method might be expected to account for this behaviour is found in work of
Vaughan and Wooley \cite[Appendix]{vw}, which we summarise here.

Continuing to assume that  $k=\QQ$, suppose that the hypersurfaces that cut out $X$
are defined by forms $F_i\in \ZZ[x_0,\dots,x_n]$ of degree $d_i$, for $1\leq i\leq s$.
In order to establish \eqref{conj:Manin_hypersurface} it suffices to study the counting function
$$
\hat N(B)=\#\{\x\in \ZZ^{n+1}: \mbox{$\|\x\|\leq B$ and $F_i(\x)=0$ for $1\leq i\leq s$}\},
$$
as $B\rightarrow \infty$.  This amounts to counting integral points of bounded height on the universal torsor over $X$ (note that
the affine cone over $X$ in $\mathbb{A}^{n+1}\setminus \{\mathbf{0}\}$ is the unique universal torsor over $X$ up to isomorphism).
By orthogonality one has
$$
\hat N(B)=\int_{(0,1]^s} S(\bal;\mathbf{F}) \d \bal,
$$
where
$$
S(\bal;\mathbf{F}) =\sum_{\substack{\x\in \ZZ^{n+1}\\ \|\x\|\leq B  }}
e^{2\pi i\{\alpha_1F_1(\x)+\dots+\alpha_s F_s(\x)\}}.
$$
The circle method is concerned with an asymptotic analysis of this integral, based on a suitable dissection of the range of integration into {\em major arcs} and {\em minor arcs}.
The  general philosophy behind the circle method is that the contribution from the
major arcs should lead to a main term in agreement with the right hand side of \eqref{conj:Manin_hypersurface}
with $c_{U,H}=c_{H,\text{Peyre}}$,
while the {\em minor arcs}
should account for the contribution from any accumulating subvarieties.
In fact, according to \cite[Appendix]{vw},
the major arcs should account for the contribution to $\hat N(B)$
from all points not lying
on a proper subvariety of ``integral degree'' at most $d$ and height at most $B^\delta$, for a sufficiently small positive number $\delta$. Here, the integral degree is defined to be the minimum of the sum of degrees, taken over all possible sets of forms defining the subvariety. According to this theory, therefore,  the contribution from any saturated
subvarieties would come from the minor arcs.

\subsection{Linear subvarieties}\label{s:linear}

We now return to the case of a general number field $k$.
Let $r \in \NN$. By \cite{Sch79},
for any $r$-plane $L \subset \PP^n$ defined over $k$
there is a constant $c_{H_0,L} >0$ such that
\begin{equation} \label{eqn:L}
	N(L,H_0,B) \sim c_{H_0,L}B^{r+1},
\end{equation}
where $H_0$ is the height function
$$H_0(x) = \prod_{v \in \Val(k)} \max\{|x_0|_v,\dots,|x_n|_v\},$$
on $\PP^n$.
Note that $H_0^{n+1-d}$ is an anticanonical height on $X$.
As the asymptotic formula \eqref{eqn:L} also holds on every non-empty open subset of $L$,
it follows that any $(n-d)$-plane $L$ over $k$ properly contained in $X$ is saturated.

\begin{theorem}\label{thm:CI}
	Let $X \subset \PP^n$ be a Fano complete intersection of $s$  hypersurfaces
	of degrees $d_1,\dots,d_s$ over a number field $k$. Assume that $\dim X > 2$, that $d = d_1 + \dots + d_s > 1$
	and $d<n$. Suppose that $X$ contains a $(n-d)$-plane $L$ defined over $k$.
	Then for any open subset $U \subset X$ which meets  $L$ and any $A \geq 0$ there 
    exists a choice of anticanonical height function $H_A$ on $X$  such that
	$$\liminf_{B \to \infty}\frac{N(U,H_A,B)}{B} > (1 + A) c_{H_A,\textrm{Peyre}}.$$
\end{theorem}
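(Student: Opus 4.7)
The plan is to verify that the $(n-d)$-plane $L$ is saturated in the sense of Definition~\ref{d}, and then to invoke Theorem~\ref{thm:WA1}. As recorded in the text, any smooth Fano complete intersection $X \subset \PP^n$ of dimension at least $3$ has Picard rank $\rho(X) = 1$, so the $(\log B)^{\rho(X)-1}$ factor appearing in Definition~\ref{d} and in the conclusion of Theorem~\ref{thm:WA1} reduces to $1$ throughout.

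For the saturation calculation I would take the anticanonical height $H = H_0^{n+1-d}$, where $H_0$ is the standard projective height of \eqref{ex:height}; this is anticanonical since $\omega_X^{-1} = \OO_X(n+1-d)$. Schanuel's theorem in the form \eqref{eqn:L}, applied to the $(n-d)$-plane $L$, gives
$$N(L, H_0, B) \sim c_{H_0, L}\, B^{n-d+1},$$
and so, after rescaling the height,
$$N(L, H, B) = N(L, H_0, B^{1/(n+1-d)}) \sim c_{H_0, L}\, B.$$
For any non-empty open $L^\circ \subset L$, the complement $Z = L \setminus L^\circ$ is a proper closed subvariety of $L \cong \PP^{n-d}$, and a standard bound on points of bounded height on a proper closed subvariety of projective space yields $N(Z, H_0, B) = O(B^{n-d})$. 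Hence $N(Z, H, B) = O(B^{(n-d)/(n+1-d)}) = o(B)$, the exponent being strictly less than $1$ because $d < n$ forces $n+1-d \geq 2$. Thus $N(L^\circ, H, B) \sim c_{H_0, L}\, B$ for every such $L^\circ$, verifying Definition~\ref{d}; the remark following that definition upgrades saturation to every choice of anticanonical height.

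With $L$ saturated, Theorem~\ref{thm:WA1} applied to $Y = L$ and the given open $U$ (which meets $L$ by hypothesis) supplies, for each $A \geq 0$, an anticanonical height $H_A$ on $X$ with
$$\liminf_{B \to \infty} \frac{N(U, H_A, B)}{B(\log B)^{\rho(X)-1}} > (1+A)\, c_{H_A, \textrm{Peyre}},$$
and since $\rho(X) = 1$ this is precisely the claimed inequality.

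The only real content is the saturation check, and the point to watch is the matching of exponents: Schanuel produces growth of order $B^{n-d+1}$ for the naive height $H_0$, while $n+1-d$ is the precise exponent that makes $H_0^{n+1-d}$ anticanonical, so rescaling yields a clean linear term matching the Manin rate on $X$. No further obstacle is foreseen beyond this bookkeeping.
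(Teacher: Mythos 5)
Your proposal is correct and follows essentially the same route as the paper: the paper likewise uses Schanuel's theorem \eqref{eqn:L} together with the fact that $H_0^{n+1-d}$ is anticanonical (and that \eqref{eqn:L} persists on non-empty open subsets of $L$, which is the content of your $O(B^{n-d})$ bound for the complement) to conclude that any $(n-d)$-plane over $k$ properly contained in $X$ is saturated, and then deduces Theorem~\ref{thm:CI} by applying Theorem~\ref{thm:WA1} to $L$, with $\rho(X)=1$ coming from the Lefschetz hyperplane theorem exactly as you note.
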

\begin{proof}
	The result follows on applying Theorem \ref{thm:WA1} to $L$.
	\end{proof}

In particular Theorem \ref{thm:CI} implies that for such complete intersections, the asymptotic formula
 \eqref{conj:Manin_hypersurface} cannot hold with $U=X$
and $c_{U,H} = c_{H,\text{Peyre}}$,
for every choice of anticanonical height
function $H$.

Note that one can make the height functions appearing in Theorem \ref{thm:CI} very explicit.
Assume for simplicity that $X$ is as in Theorem \ref{thm:CI} with $k=\QQ$ and that $X$ contains a $(n-d)$-plane $L$
defined over $\QQ$. Then after a linear change of variables (which will only change any height
functions by a bounded amount), we may assume that
$$L \subset \{x_0 = 0\} \quad \text{ and } \quad X \not \subset \{x_0 = 0\}.$$
For any $\lambda > 0$ we  take the height function
$$H_\lambda(x) = \max\{\lambda|x_0|,|x_1|,\dots,|x_n|\}^{n+1-d},$$
where we have chosen a representative $x=(x_0:\dots:x_n)$ such that $(x_0,\dots,x_n)$ is a primitive
integer vector. Clearly  $H_\lambda(x) = H_1(x)$ for any $x \in L(\QQ)$ and the proof of Theorem \ref{thm:WA}
shows that
$$\lim_{\lambda \to \infty} c_{H_\lambda,\textrm{Peyre}} = 0.$$
This can also be proved by studying the  associated ``singular integral'' in the circle method.

We now give some examples to which Theorem \ref{thm:CI} applies. We first show that
there are examples of arbitrarily large dimension over every number field.

\begin{example}[Fermat hypersurfaces] \label{ex:Fermat}
Let $d\geq 3.$
	For any $n\in \NN$ such that $n >d \geq n/2+1$ and any number field $k$,
	we claim that there is a non-singular hypersurface $X\subset \PP^n$ of degree $d$
	which contains a $(n-d)$-plane defined over $k$.
	For any $r<n$ consider the Fermat hypersurface
	$$X_r: \quad x_0^{d} + \dots + x_{r}^{d} =x_{r+1}^{d} + \dots + x_n^d \quad \subset \PP^n,
	$$
	over $k$. Suppose first that $n=2r+1$ is odd for $r>1$.
	Then $X_r$ contains the $r$-plane
	$x_i = x_{i +r+1}$, for  $i = 0,\dots,r.$
	In particular, $X_r$ contains a $(n-d)$-plane if $d \geq (n+1)/2$ and hence
	Theorem \ref{thm:CI} applies in this case.
	If instead $n=2r$ is even for $r>1$, then $X_r$ contains the
$(r - 1)$-plane given by the equations
	$x_0 = 0$ and $x_i = x_{i + r}$  for $i = 1,\dots,r.$ It follows that  $X_r$ contains a $(n-d)$-plane if
$d \geq n/2 +1$ and hence  	Theorem~\ref{thm:CI} again applies in this case.
\end{example}
Examples of the above type were also considered by Hooley \cite{Hoo86} in the case
$n=5$ and $d=3$ over $\QQ$.
One may construct more general examples to which Theorem \ref{thm:CI} applies using known facts about the Fano
variety of $r$-planes. Recall that the collection of $r$-planes inside $X$ forms a closed subscheme
of the Grassmannian $\mathbb{G}(r,n)$ of $r$-planes in $\PP^n$, which we denote by $F_r(X)$ and call the  \emph{Fano
variety} of $r$-planes inside $X$ (one should not confuse these with the Fano varieties as defined above).
We have the following result.

\begin{lemma}\label{lem:Fano}
	Let $X \subset \PP^n$ be a Fano complete intersection of $s$  hypersurfaces
	of degrees $d_1,\dots,d_s$ over a field $k$ of characteristic zero. 
	Then $F_r(X)$ is non-empty (as a scheme) if
	\begin{equation} \label{eqn:F_r_non_empty}
        \min\left\{(r+1)(n-r) - \sum_{i=1}^s{d_i+r \choose r}, n -2r -s\right\} \geq 0.
    \end{equation}
    Moreover if \eqref{eqn:F_r_non_empty} does not hold and $X$ is general, then $F_r(X)$ is empty.
	If \eqref{eqn:F_r_non_empty} holds then
	\begin{equation}\label{eq:lower}\dim F_r(X) \geq (r+1)(n-r) - \sum_{i=1}^s{d_i+r \choose r}.\end{equation}
	If \eqref{eqn:F_r_non_empty} holds and $X$ is general, then one has equality in \eqref{eq:lower} and
	$F_r(X)$ is smooth and geometrically integral.
\end{lemma}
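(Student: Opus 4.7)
The plan is to realise $F_r(X)$ as the zero scheme of a section of a natural globally generated vector bundle on the Grassmannian $G = \mathbb{G}(r, n)$, and to deduce all four assertions from this single presentation. Let $\mathcal{S}$ denote the tautological rank-$(r+1)$ subbundle on $G$, so that the fibre of $\mathcal{S}^\vee$ over $\Lambda$ is $H^0(\Lambda, \mathcal{O}_\Lambda(1))$. Restriction of a form $F \in H^0(\PP^n, \mathcal{O}(d))$ to the variable plane yields a global section of $\mathrm{Sym}^d \mathcal{S}^\vee$ whose zero scheme is the subscheme of $G$ parametrising $r$-planes contained in $\{F = 0\}$. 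Applied to the defining equations $F_1, \ldots, F_s$ of $X$, this realises $F_r(X)$ as the zero scheme of a section $\sigma_X$ of
$$
E := \bigoplus_{i=1}^s \mathrm{Sym}^{d_i}\mathcal{S}^\vee,
$$
a globally generated bundle of rank $\sum_i \binom{d_i+r}{r}$ on the $(r+1)(n-r)$-dimensional smooth projective variety $G$. The lower bound \eqref{eq:lower} is then immediate: any non-empty component of the zero scheme of a section of a rank-$e$ vector bundle on a smooth $N$-dimensional variety has dimension at least $N - e$.

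For non-emptiness I would exploit the observation that a nowhere-vanishing section of a globally generated bundle $E$ produces an injection $\mathcal{O}_G \hookrightarrow E$ with vector-bundle quotient of strictly smaller rank, forcing $c_{\mathrm{top}}(E) = 0$ by the Whitney formula. It therefore suffices to show $c_{\mathrm{top}}(E) \neq 0$ in $H^*(G)$ under \eqref{eqn:F_r_non_empty}. This is a classical Schubert-calculus computation in which both clauses enter essentially: $\delta \geq 0$ is needed to place $c_{\mathrm{top}}(E)$ in a degree not exceeding $\dim G$, while $n - 2r - s \geq 0$ prevents the resulting polynomial in the Chern roots of $\mathcal{S}^\vee$ from vanishing for Pieri-type reasons on $G$. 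I would invoke this computation from the work of Debarre--Manivel on Fano varieties of linear subspaces rather than re-derive it.

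For the two generic statements I would introduce the standard incidence correspondence
$$
I = \{(X, \Lambda) \in \mathbb{P} \times G : \Lambda \subset X\}, \qquad \mathbb{P} = \prod_{i=1}^s \PP\bigl(H^0(\PP^n, \mathcal{O}(d_i))\bigr),
$$
whose projection to $G$ makes it a Zariski-locally trivial bundle of linear subspaces of codimension $\sum_i \binom{d_i+r}{r}$ in $\mathbb{P}$; consequently $I$ is smooth and irreducible of dimension $\dim \mathbb{P} + \delta$, with fibre $F_r(X)$ over $[X]$ of the first projection $p_1$. When \eqref{eqn:F_r_non_empty} fails, either $\dim I < \dim \mathbb{P}$ or $c_{\mathrm{top}}(E) = 0$; in either case a standard argument (using positivity of effective cycles on the Grassmannian for the second case) forces the image of $p_1$ to be a proper closed subvariety of $\mathbb{P}$, hence $F_r(X) = \emptyset$ for general $X$. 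When \eqref{eqn:F_r_non_empty} holds, the non-emptiness step makes $p_1$ surjective, so the generic fibre has dimension $\delta$, giving equality in \eqref{eq:lower}. Smoothness at $[\Lambda] \in F_r(X)$ reduces, via the normal-bundle sequence
$$
0 \to N_{\Lambda/X} \to N_{\Lambda/\PP^n} \to N_{X/\PP^n}|_\Lambda \to 0
$$
together with $H^1(\Lambda, N_{\Lambda/\PP^n}) = 0$, to surjectivity of $H^0(N_{\Lambda/\PP^n}) \to H^0(N_{X/\PP^n}|_\Lambda)$, which is precisely surjectivity of the differential of $p_1$ at $(X, \Lambda)$ and holds over a non-empty open of $\mathbb{P}$ by generic smoothness in characteristic zero. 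Geometric integrality of the generic fibre finally follows from a Fulton--Hansen-type connectedness theorem for zero loci of globally generated bundles on Grassmannians. The main obstacle is the Schubert-calculus input of the second paragraph --- identifying $c_{\mathrm{top}}(E) \neq 0$ on $G$ with the combinatorial condition \eqref{eqn:F_r_non_empty}; once that cohomological computation is in hand, the remaining assertions reduce to incidence correspondences, the normal-bundle sequence and standard Bertini/connectedness arguments.
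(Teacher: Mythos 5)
The paper's own proof is essentially a one-line citation: it refers all four assertions to Debarre--Manivel \cite[Thm.~2.1]{DM98} and only supplies an extra argument for the lower bound \eqref{eq:lower}, observing that the incidence correspondence $I_r$ is geometrically irreducible of codimension $\sum_i \binom{d_i+r}{r}$ in $\mathrm{Sym}^{\mathbf{d}}V^*\times\mathbb{G}(r,n)$ and then invoking the theorem on fibre dimensions (\cite[Cor.~11.13]{harris}) to bound $\dim F_r(X)$ from below for \emph{every} $X$, not just a general one. Your proposal takes a genuinely different, and more self-contained, route: you present $F_r(X)$ as the zero scheme of a section of $E = \bigoplus_i \mathrm{Sym}^{d_i}\mathcal{S}^\vee$ on $\mathbb{G}(r,n)$, and derive the lower bound \eqref{eq:lower} for arbitrary $X$ from the elementary fact that a section of a rank-$e$ bundle on a smooth $N$-fold has zero locus of local codimension at most $e$. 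This is cleaner than the paper's incidence-correspondence route to the same statement, and it is worth noting that it is the one part of the lemma the authors felt they had to argue themselves. For the remaining clauses you re-derive a good portion of Debarre--Manivel (nonemptiness from $c_{\mathrm{top}}(E)\neq 0$; generic equality, smoothness and connectedness from the incidence correspondence, the normal-bundle sequence, generic smoothness and a Fulton--Hansen-type theorem), citing them only for the Schubert-calculus computation, whereas the paper simply cites them for everything. Both are legitimate; yours trades brevity for transparency about where the inputs live.

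One step you should tighten: in the generic-emptiness argument for the case $\delta := (r+1)(n-r)-\sum_i\binom{d_i+r}{r}\geq 0$ but $n-2r-s<0$, you assert $c_{\mathrm{top}}(E)=0$ and then deduce emptiness ``by positivity of effective cycles.'' The positivity argument itself is fine (a generic section vanishes transversally, so a nonempty generic $F_r(X)$ would represent the nonzero effective class $c_{\mathrm{top}}(E)$), but the vanishing $c_{\mathrm{top}}(E)=0$ is not something you have given a reason for, and it is not the direction in which the Debarre--Manivel argument runs. In \cite{DM98} the constraint $n-s\geq 2r$ comes from a direct geometric obstruction --- the normal-bundle sequence for an $r$-plane $\Lambda$ inside a \emph{smooth} $X$ forces $\dim X\geq 2r$ --- and the Chern-class vanishing is then a consequence of that generic emptiness rather than its cause. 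As written, your step reads as though Schubert calculus alone yields $c_{\mathrm{top}}(E)=0$ when $n-2r-s<0$; either cite this explicitly from Debarre--Manivel alongside the nonvanishing half, or replace it with the normal-bundle obstruction, which is shorter and avoids any appearance of circularity.
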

\begin{proof}
This follows from work of  Debarre and Manivel
 \cite[Thm.~2.1]{DM98}. The only thing that requires further explanation is that
 one always has the lower bound \eqref{eq:lower}
when  \eqref{eqn:F_r_non_empty} holds.
Complete intersections $X=X_{\mathbf{F}}$ as in the statement of the lemma are defined by
a system $\mathbf{F}=(F_1,\dots,F_s)$ of forms
in $\mathrm{Sym}^{\mathbf{d}}~V^*=\bigoplus_{i=1}^s \mathrm{Sym}^{d_i}~V^*$, where $V$ is a $k$-vector space of dimension $n+1$.
Consider the incidence correspondence
$$
I_r=\{(\mathbf{F},\Lambda)\in \mathrm{Sym}^{\mathbf{d}}~V^* \times \mathbb{G}(r,n): \Lambda\subset X_{\mathbf{F}}\}.
$$
Then the proof of
\cite[Thm.~2.1]{DM98} shows that $I_r$ is
geometrically irreducible of codimension
$\sum_{i=1}^s{d_i+r \choose r}$
in $\mathrm{Sym}^{\mathbf{d}}~V^* \times \mathbb{G}(r,n)$.
Let $Y$ be the image of $I_r$ under the projection $\pi_1:I_r\rightarrow \mathrm{Sym}^{\mathbf{d}}~V^*$.
We note that the fibre over any $\mathbf{G}\in Y$ is the Fano variety $F_r(X_{\mathbf{G}})$.
It follows from \cite[Cor.~11.13]{harris} that
\begin{align*}
\dim I_r
&=\dim Y+ \inf_{\mathbf{G}\in Y} \dim \pi_1^{-1}(\mathbf{G})\\
&\leq  \dim
\mathrm{Sym}^{\mathbf{d}}~V^*
+ \dim F_r(X),
\end{align*}
for any Fano complete intersection $X\subset \PP^n$ as in the statement of the lemma.
This establishes  \eqref{eq:lower}, on recalling that $\dim \mathbb{G}(r,n)=(r+1)(n-r).$
\end{proof}

We apply this result to study complete intersections with  $d=n-1$, as follows.

\begin{example} \label{ex:n-1}
Let  $s$ be arbitrary, with  $d=n-1$. In this case we are  interested in lines.
A simple calculation using Lemma \ref{lem:Fano} shows that $X$ obtains a line over a finite
field extension $K$ of $k$ and so Theorem \ref{thm:CI} applies to $X_K$. Note that for a
\emph{general} such complete intersection we have $\dim F_1(X) = n-s-1$ by Lemma~\ref{lem:Fano},
and any point on $X$ lies on a line defined over $\kbar$ (see \cite[Rem.~3.3]{zav}).
Now let  $s=1$ and let $X\subset \PP^n$ be a non-singular hypersurface of degree $d=n-1$.
It has been conjectured by Debarre and de Jong that  we always have $\dim F_1(X) = n-2$ and Beheshti \cite{zav} has proved this for $d\leq 6$.
It would be interesting to find an example  for which the
rational points on the Fano variety of lines are Zariski dense. Note that Beheshti \cite[Thm.~2.1]{zav}
has  shown that  $F_1(X)$
contains no uniruled divisors, whence $F_1(X)$ itself cannot be uniruled.
\end{example}

We can say more in some special cases.
Elsenhans and Jahnel \cite{EJ06} have produced computational evidence  for
diagonal cubic threefolds over $\QQ$, which
suggests that one needs to remove all the $\QQ$-lines if one hopes to
obtain Peyre's constant in Manin's conjecture.
Using Lemma~\ref{lem:Fano} and Theorem~\ref{thm:CI}, we can show  that this is indeed the case.

\begin{example}[Cubic threefolds]\label{Ex:cubic}
This example is based on the investigation of Bombieri and Gubler \cite[\S 11.10]{bomb}.
Suppose that $s=1,n=4$ and $d=3$, i.e.\ $X$ is a non-singular cubic threefold.
Example \ref{ex:n-1} shows that  $X$ contains a line over a finite field extension $K$ of $k$ and so
Theorem \ref{thm:CI} applies to $X_K=X\times_k K$. It is classically known 
that the Fano variety $F_1(X)$ of lines is a surface and that the lines geometrically sweep out all
of $X$ and so, over $\kbar$, every point on $X$ lies on a line. However, the $K$-lines
cannot be Zariski dense over a number field and so Corollary \ref{cor:WA2} does not apply.
Indeed, $F_1(X)$ is a surface of general type which embeds inside its Albanese variety and hence
the rational points on $F_1(X)$ are not dense over any number field by a theorem of Faltings \cite{Fal91}.
\end{example}

Recall that for a variety $V$ defined over $k$, the rational points of $V$ are said to be {\em potentially dense}
if there exists a finite extension $K$ of $k$ such that $V(K)$ is Zariski dense in $V$ (see \cite{Has03}).
The previous example shows that potential density fails for the Fano variety of lines in a cubic threefold.
We now come to an example to which Corollary \ref{cor:WA2} applies.

\begin{example}[Intersections of two quadrics]\label{Ex:2_quadrics}
This example has also been considered by Hassett, Tanimoto and Tschinkel \cite[Example~27]{sho}.
Suppose that $s=2, n=5$ and $d_1=d_2=2$, i.e.\ $X$ is a non-singular intersection of two quadrics
in $\PP^5$.  Example \ref{ex:n-1} shows that  Theorem \ref{thm:CI} applies to $X$ over some finite
field extension of $k$.
In fact, one may deduce something stronger.

The Fano variety of lines $F_1(X)$ is classically known to be a principal homogeneous
space of the Jacobian of an explicit curve of genus two (see \cite[Thm.~17.0.1]{CF96}),
and moreover the lines sweep out all of $X$.
It is well-known that potential density holds for abelian varieties (see \cite[Prop.~4.2]{Has03}).
In particular there exists a finite field extension
$K$ of $k$ such that the collection of lines defined
over $K$ is Zariski dense in $X_K$. Therefore Corollary \ref{cor:WA2} applies. We deduce that there
is no choice of open subset $U$ of $X_K$ for which Manin's conjecture
holds with Peyre's constant, uniformly with respect to every choice
of anticanonical height function.

For completeness, we now construct such an  example over $\QQ$.
The following computations were performed using \texttt{Magma} \cite{Magma}.
Consider the following intersection $X$ of two quadrics
\begin{align*}
	x_0^2 + x_1^2 &= x_2^2 +  x_3^2 + x_4^2 + x_5^2 + 8x_0x_5 \\
	2x_0x_1 &= 2x_2x_3 + x_5^2,
\end{align*}
viewed over $\QQ$.
One easily checks that $X$ is non-singular.
The corresponding hyperelliptic curve is given by
$$C: \quad y^2 = x^5 - x^4 - 2x^3 + 18x^2 + x - 17.$$
Denote by $J = \Jac(C)$ the Jacobian of $C$.
Note that $C$ has a rational point at infinity, which corresponds to the fact that the pencil
of quadrics given by $X$ contains a degenerate quadric over $\QQ$.
The line $x_0 = x_2, x_1=x_3, x_4=x_5=0$ lies in $X$, in particular there is an isomorphism
$F_1(X) \cong J$ over $\QQ$. The rank of the Mordell-Weil group of $J$ is $1$. Hence to prove that
the rational points on $J$ are Zariski dense, it suffices to show that $J$ is simple as an abelian
variety (see the proof of \cite[Prop.~4.2]{Has03}). Now, $J$ has good reduction at $3$ and moreover
the characteristic polynomial of the  Frobenius endomorphism on the Tate module at $3$ is irreducible,
so by a theorem of Tate \cite[Thm.~1(b)]{Tat66}
we see that the reduction of $J$ modulo $3$ is simple. Therefore $J$ itself must be simple.
Hence the collection of $\QQ$-lines on $X$ is Zariski dense over $\QQ$, and moreover this obviously also holds
over every finite field extension of $\QQ$.
Other examples may be constructed in a similar manner.
\end{example}

Even though the collection of rational points which lie on a $k$-line in $X$
can be Zariski dense in the previous example, we shall nonetheless show in \S \ref{sec:compatible}
that the collection of such points is  \emph{thin} in $X$. In particular, this example
is  compatible with Conjecture \ref{conj:Manin}.
The following  example involves planes instead of lines.

\begin{example}[Intersections of $n/2-1$ quadrics]\label{ex:n/2-1}
Suppose now that $n>4$ is even and that $X\subset \PP^n$ is a non-singular
complete intersection of $s = n/2-1$  quadrics.
Then  $d= n-2$ and we are interested in planes. A simple calculation using
Lemma \ref{lem:Fano} shows that $F_2(X)$ is non-empty, hence $X$ obtains a plane over some finite
field extension $K$ of $k$ and Theorem \ref{thm:CI} applies to $X_K$. Note that for a general such $X$
we have $\dim F_2(X)=0$.  Thus for general $X$ the  planes sweep out a proper  Zariski closed subset.
\end{example}

Using  Lemma \ref{lem:Fano}, we may now show that Examples \ref{ex:n-1} and \ref{ex:n/2-1}  essentially exhaust
all possible applications of Theorem \ref{thm:CI} for \emph{general} complete intersections.
To this end it suffices to consider the case where $d_i \geq 2$ for each $i=1,\dots,s$.

\begin{theorem} \label{thm:n-d_classification}
	Let $X \subset \PP^n$ be a Fano complete intersection of $s$ hypersurfaces
	of degrees $d_1,\dots,d_s\geq 2$ over a field $k$ of characteristic zero. Assume that  $d=d_1 + \dots + d_s<  n-1$  and that if $n$ is even then $X$ is
    not an intersection of $n/2-1$ quadrics. If $X$ is general, then it does not contain a $(n-d)$-plane.	
\end{theorem}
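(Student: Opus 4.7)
The plan is to reduce the statement to the numerical criterion supplied by Lemma~\ref{lem:Fano}. Setting $r := n-d$, the hypothesis $d < n-1$ gives $r \geq 2$, so by Lemma~\ref{lem:Fano} it suffices to show that the condition \eqref{eqn:F_r_non_empty} fails for this $r$. I will do this by showing that the first quantity in the minimum is strictly negative, i.e.\ that
\[
(r+1) d \;=\; (r+1)(n-r) \;<\; \sum_{i=1}^{s} \binom{d_i + r}{r};
\]
the auxiliary condition $n - 2r - s \geq 0$ will never be needed.

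The heart of the argument will be the elementary binomial inequality that for integers $d_i, r \geq 2$,
\[
\binom{d_i + r}{r} \;\geq\; (r+1) d_i,
\]
with equality if and only if $d_i = r = 2$. My plan for this is to use $\binom{d_i+r}{r} \geq \binom{d_i+r}{2}$, which is valid exactly when $2 \leq r \leq d_i + r - 2$, i.e.\ whenever $d_i \geq 2$, and then to note the algebraic identity
\[
2\left( \binom{d_i + r}{2} - (r+1) d_i \right) \;=\; d_i(d_i - 3) + r(r-1),
\]
whose right-hand side is nonnegative for $d_i, r \geq 2$ and vanishes precisely when $d_i = r = 2$ (as one sees by examining $r \geq 3$ and $d_i \geq 3$ separately). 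Chasing equality through both inequalities pins down the equality case.

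Summing the binomial inequality over $i$ and using $d = \sum_i d_i$ then yields $(r+1) d \leq \sum_i \binom{d_i + r}{r}$, with equality forcing $r = 2$ and $d_i = 2$ for every $i$. In this equality case we would have $d = 2s$ and $n = d + r = 2s + 2$, so $n$ is even, $s = n/2 - 1$, and $X$ is an intersection of $n/2 - 1$ quadrics, which is exactly the configuration excluded by hypothesis. Hence in every case covered by the theorem the inequality is strict, \eqref{eqn:F_r_non_empty} fails, and Lemma~\ref{lem:Fano} delivers the emptiness of $F_{n-d}(X)$ for general $X$.

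The main obstacle will be spotting the right sharp inequality and carefully handling its equality case; once this is done, the excluded configuration emerges naturally as the unique borderline case and the geometry reduces to a single application of Lemma~\ref{lem:Fano}. No circle-method or Grassmannian input beyond what is already recorded in Lemma~\ref{lem:Fano} should be required.
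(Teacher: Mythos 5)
Your proposal is correct and takes essentially the same route as the paper: both reduce via Lemma~\ref{lem:Fano} to the sharp per-degree inequality $\binom{d_i+r}{r} \geq (r+1)d_i$ for $d_i, r \geq 2$ with equality precisely when $d_i=r=2$, sum over $i$, and identify the equality case with the excluded configuration of $n/2-1$ quadrics (and indeed the second entry $n-2r-s$ of the minimum is never needed). The only difference is cosmetic: the paper proves the binomial inequality by counting the monomials $x_i^{d},x_i^{d-1}x_{i+1},\dots,x_ix_{i+1}^{d-1}$ of degree $d$ in $r+1$ variables, whereas you use $\binom{d_i+r}{r}\geq\binom{d_i+r}{2}$ together with the identity $2\bigl(\binom{d_i+r}{2}-(r+1)d_i\bigr)=d_i(d_i-3)+r(r-1)$, both of which are valid.
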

\begin{proof}
    To prove the result we need  to show that $F_{n-d}(X)$ is empty, for general $X$.
	Lemma \ref{lem:Fano}  leads to purely combinatorial problem. Namely, we need to show that if
	$r = n - d$ and $2 \leq r \leq (n-s)/2$
    then
    $$\sum_{i=1}^s{d_i+r \choose r} \geq (d_1 + \dots + d_s)(r+1),$$
	with equality if and only if $(d_1,\dots,d_s) = (2,\dots,2)$ and $r=2$.
	It will suffice to show that for any integers  $d,r \geq 2$ we have
	$$
		{d + r \choose r} \geq d(r +1),
	$$
with equality if and only if $d=r=2$.
But ${d + r \choose r}$ is just the number of monomials of degree $d$ in $r+1$ variables.
Among these are  the monomials $x_i^d,x_i^{d-1}x_{i+1},\dots,x_ix_{i+1}^{d-1}$, for $0\leq i\leq r$, where the indices are taken modulo $r$.
This establishes the desired lower bound since these monomials
 number
$d(r+1)$ in total. It is clear that the latter set of monomials exhaust all monomials of degree $d$ in $r+1$ variables precisely when $d=r=2.$
\end{proof}

Example \ref{ex:Fermat} shows that  there are complete intersections not of the form given in Example \ref{ex:n-1}
or Example \ref{ex:n/2-1} to which Theorem \ref{thm:CI} applies.
This does not contradict Theorem \ref{thm:n-d_classification}
since it only applies to
general complete intersections.

So far we have focused on non-singular complete intersections $X\subset \PP^n$ which contain $(n-d)$-planes,
with  $d=d_1+\dots+d_s<n$. Suppose now that  $d=n.$
In this setting  $(n-d)$-planes are obviously not saturated. However, Lemma~\ref{lem:Fano} implies that
such  $X_{\kbar}$ always contain a $(n+1 - d)$-plane, i.e.\ a line.
For general $X$ one  has $\dim F_1(X) = n -s - 2$,
and hence the collection of lines sweep out a proper closed subset in $X$.
In this setting any $k$-line  is expected to be strongly accumulating for any choice of anticanonical height $H$ on $X$.

\subsection{Compatibility with Conjecture \ref{conj:Manin}} \label{sec:compatible}
We now explain how the results of this section are compatible with Conjecture \ref{conj:Manin}. For simplicity
we focus on the case of lines on a general complete intersection of the type considered in Example \ref{ex:n-1}.

\begin{lemma} \label{lem:compatible}
	Let $X \subset \PP^n$ be a general non-singular complete intersection
	of $s$ hypersurfaces of degrees $d_1,\ldots,d_s$ over a number field
	$k$ with $d = n-1$. Then the collection of rational points on $X$ which lie on a $k$-line in $X$
	is thin.
\end{lemma}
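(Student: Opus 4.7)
The plan is to realize the set of rational points of $X$ lying on some $k$-line as the image of a generically finite dominant covering of $X$ of degree at least $2$ from an irreducible variety. This places it in the second category of Serre's definition of a thin set.

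The key construction is the universal line: let $\mathcal{F} = F_1(X)$ and consider the incidence variety $\mathcal{L} = \{(p,\ell) \in X \times \mathcal{F} : p \in \ell\}$, equipped with the projection $\rho \colon \mathcal{L} \to \mathcal{F}$ (a $\PP^1$-bundle coming from the tautological subbundle on the Grassmannian) and the evaluation map $\pi \colon \mathcal{L} \to X$. By construction $\pi(\mathcal{L}(k))$ is precisely the set of $k$-rational points of $X$ lying on a $k$-line of $X$. It therefore suffices to verify that $\mathcal{L}$ is geometrically integral and that $\pi$ is dominant, generically finite, and of degree at least $2$.

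Geometric integrality of $\mathcal{L}$ and the dimension count are immediate from Lemma \ref{lem:Fano} with $r = 1$: for general $X$ the variety $\mathcal{F}$ is smooth and geometrically integral of dimension exactly $n - s - 1$, hence $\mathcal{L}$ is smooth and geometrically integral of dimension $n - s = \dim X$. Dominance of $\pi$ follows from the fact recalled in Example \ref{ex:n-1} that every geometric point of $X$ lies on a line, after which generic finiteness is automatic from the coincidence of dimensions.

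The step I expect to be the most delicate is the inequality $\deg \pi \geq 2$, that is, the statement that more than one line of $X$ passes through a general point $p$. I would address this by the standard Bézout count on $\PP(T_p \PP^n) \cong \PP^{n-1}$: a line through $p$ in direction $v$ lies in the hypersurface $\{F_i = 0\}$ precisely when $d_i$ polynomials in $v$, of respective degrees $1, 2, \ldots, d_i$, vanish, giving $d = n - 1$ conditions in $\PP^{n-1}$. For general $X$ and general $p$ these conditions meet transversely, yielding a zero-dimensional intersection of length $\prod_{i=1}^s d_i!$. The standing assumption $\dim X > 2$ combined with $d = n - 1$ forces some $d_i \geq 2$ (otherwise $s = d = n - 1$ and $\dim X = 1$), so this count is at least $2$. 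Consequently $\pi(\mathcal{L}(k))$ is a thin subset of $X(k)$ of the required kind and contains every $k$-rational point of $X$ lying on a $k$-line.
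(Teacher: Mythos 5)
Your argument is correct, and it reaches the key inequality $\deg \pi \geq 2$ by a genuinely different route from the paper. Both proofs set up the same universal family of lines inside $F_1(X) \times X$ and reduce thinness to showing that the dominant, generically finite evaluation map onto $X$ has degree at least $2$. The paper does this birationally: by \cite[Thm.~2.1]{zav} the Fano scheme $F_1(X)$ is not uniruled, hence not rationally connected, so neither is the $\PP^1$-bundle over it; since $X$ is Fano and therefore rationally connected, the universal family cannot be birational to $X$, and a non-birational generically finite dominant map has degree at least $2$. You instead compute the degree enumeratively: the fibre over a general point $p$ is cut out in $\PP(T_p\PP^n)\cong\PP^{n-1}$ by forms of degrees $1,2,\dots,d_i$ for each $i$, giving $d=n-1$ conditions, so B\'ezout yields length $\prod_{i=1}^{s} d_i!\geq 2$ as soon as some $d_i\geq 2$, which $\dim X>2$ guarantees. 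One small point: the transversality assertion you flag as delicate is not actually needed (and as stated would itself require justification). Since $F_1(X)$ is smooth and geometrically integral for general $X$ by Lemma \ref{lem:Fano}, the universal family is smooth, and in characteristic zero generic flatness (or generic smoothness) of $\pi$ identifies $\deg\pi$ with the length of the general fibre, which B\'ezout computes to be $\prod_i d_i!$; reducedness of the general fibre is then a consequence, not a prerequisite. As for what each approach buys: your count is more elementary and gives the exact degree ($6$ for cubic threefolds, $4$ for intersections of two quadrics, matching the classical values), whereas the paper's rational-connectedness argument needs no enumerative input and is more robust --- as the paper notes after the lemma, it applies to arbitrary (not just general) smooth intersections of two quadrics, and would extend to arbitrary smooth hypersurfaces with $d=n-1$ granted the Debarre--de Jong conjecture, since Beheshti's theorem does not require $X$ to be general.
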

\begin{proof}
	Write $F = F_1(X)$. As $X$ is general,  by Lemma~\ref{lem:Fano} we know that
	$F$ is smooth and geometrically integral, with $\dim F = n - s -1$.
	The universal family of lines in $X$ is a variety $V \subset F \times X$.
	The fibres of the projection $\pi_1$ onto $F$ correspond to the lines inside $X$, with the projection
	$\pi_2$ onto $X$ corresponding to the inclusion of the lines into $X$. In particular $\pi_2(V(k))$
	consists of exactly those rational points on $X$ which lie on a $k$-line contained in $X$.
	Since $\dim F = n - s - 1$,  we have $\dim V = n - s$. As explained in Example~\ref{ex:n-1},
	the map $\pi_2$ is dominant.
	To show that $\pi_2(V(k))$ is thin, it suffices to show that $\pi_2$ is not a birational morphism.
	By \cite[Thm.~2.1]{zav} we know that $F$ is not uniruled.
	In particular $F$  is not rationally connected and so
	 $V$ is also not rationally connected. But $X$ is
	 a Fano variety and so is
	rationally connected (see \cite[Cor.~6.2.11]{IP98}), whence it
	 cannot be birational to $V$.
\end{proof}

The key result \cite[Thm.~2.1]{zav} used in the above proof applies to arbitrary non-singular
complete intersections  $X$ of the above type (not just general $X$). Assuming the truth
of the conjecture of Debarre and de Jong mentioned in Example \ref{ex:n-1},
it therefore seems likely that the conclusion of Lemma \ref{lem:compatible}
should hold for arbitrary non-singular hypersurfaces with $d = n-1$.
It is also possible to prove the analogue of Lemma \ref{lem:compatible}
for \emph{any} non-singular intersection of two quadrics $X$ over $k$, as in Example \ref{Ex:2_quadrics}.
In this case  $F_1(X_{\kbar})$ is an abelian surface, which is certainly
not rationally connected. The same proof given in Lemma~\ref{lem:compatible} therefore
shows that the collection of rational points which lie on a $k$-line in $X$ is thin.

\section{Fano threefolds}\label{s:fano}
We now come to some applications of Theorem \ref{thm:WA1} to Fano threefolds.

\begin{definition}\label{def:index}
The {\em index}
of a Fano variety $X$ over a  field $k$, denoted by
 $\Index(X)$,
  is the largest positive integer
$r$ such that $-K_X = r \cdot D$ for some $D \in \Pic X$.
\end{definition}

Note that in general this definition depends on the choice of ground field (e.g.\ a conic without a rational point
has index one, whereas $\PP^1$ has index two).
Let $X$ be a Fano variety defined over a number field $k$, with geometric Picard group
 $\Pic X_{\kbar}$. All of the Fano varieties $X$ appearing in this section
will be assumed to contain a $k$-point, so that
$\Pic X\cong (\Pic X_{\kbar})^{\mathrm{Gal(\kbar/k)}}$ and hence $\Index(X) = \Index(X_{\kbar})$.
We shall focus our attention on
 Fano threefolds of (geometric) Picard number one, by which we mean that
 $\Pic X_{\kbar}\cong \ZZ$.
Since $X(k)\neq \emptyset$, we have
$\Pic X\cong \Pic X_{\kbar}$.
The Fano varieties considered in Examples~\ref{ex:n-1} and \ref{ex:n/2-1} have index $2$ and $3$,
respectively.
As a general rule, the arithmetic of Fano varieties of Picard number one  seems harder to understand as the  index gets smaller.

There has been an industrious  research programme  on Manin's conjecture for
del Pezzo surfaces, the Fano varieties of dimension two.
In stark  contrast to this, very little is known about the Manin conjecture for Fano threefolds $X$,
aside from some special threefolds, such as those equipped with a group action
with an open dense orbit (e.g.\ toric varieties). The only general result is by Manin \cite[Thm.~0.4]{manin},
who has shown that after a possible extension of the ground field,
for all non-empty open subsets $U\subset X$ we have 
$$
 \limsup_{B\rightarrow \infty} \frac{\log N(U,H,B)}{\log B}\geq 1,
$$
for any anticanonical height $H$ on $X$.

Let $X$ be a Fano threefold of Picard number one.
In what follows we fix an ample generator $D_X$ for $\Pic X$, which gives the
isomorphism $\Pic X\cong \ZZ$.
In particular,  we have $-K_X = \Index(X)\cdot D_X$.
We define the {\em degree} of $X$ to be the integer
$
d(X)=D_X^3$
and
the \emph{genus} of $X$ to be $g(X)=(-K_X)^3/2 + 1$,
which is also an integer.
The Fano threefolds of Picard number one
have been classified by Iskovskikh (see \cite{IP98}).
Using this classification we shall show that,
except in a few special cases, saturated subvarieties occur and, moreover, 
the union of the saturated
subvarieties can even be Zariski dense over a number field.
We also give some examples for which the union of saturated subvarieties
under consideration can never be Zariski dense over a number field, even
though they are dense over the algebraic closure. Our result is as follows.

\begin{theorem}\label{thm:Fano3}
	Let $X$ be a Fano threefold
	over a number field $k$
	with Picard number one  such that $X(k) \neq \emptyset$.
	Assume that $D_X$ is very ample.
	If $\Index(X) = 1$ or $2$ then there exists a finite field extension $k \subset K$
	such that $X_K$ contains a saturated subvariety $Y$.
	For any open subset $U \subset X_K$ which meets $Y$ and any $A \geq 0$ there
    exists a choice of anticanonical height function $H_A$ on $X_K$  such that
	$$\liminf_{B \to \infty}\frac{N(U,H_A,B)}{B} > (1 + A) c_{H_A,\textrm{Peyre}}.$$
	Moreover, suppose that $(\Index(X),d(X)) \in \{(2,4),(2,5) \}$.
	Then there exists a finite field extension $k \subset K$ such that
	for any non-empty open subset $U \subset X_K$ and any $A \geq 0$ there
    exists a choice of anticanonical height function $H_A$ on $X_K$ such that
	$$\liminf_{B \to \infty}\frac{N(U,H_A,B)}{B} > (1 + A) c_{H_A,\textrm{Peyre}}.$$
\end{theorem}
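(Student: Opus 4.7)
The plan is to combine Iskovskikh's classification of Fano threefolds of Picard number one with classical results on the Hilbert schemes of lines and conics on these threefolds.

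For the first assertion, since $\rho(X) = 1$, a proper subvariety $Y \subset X$ is saturated precisely when $0 < \liminf_{B \to \infty} N(Y^\circ, H, B)/B < \infty$ holds for every non-empty open $Y^\circ \subset Y$. For a smooth rational curve $C \cong \PP^1_k$ sitting inside $X$, the restriction to $C$ of any anticanonical height on $X$ is, up to bounded factors, a height of degree $(-K_X) \cdot C$ on $\PP^1_k$, so Schanuel's theorem \cite{Sch79} yields $N(C^\circ, H, B) \sim c B^{2/((-K_X) \cdot C)}$. Hence $C$ is saturated exactly when $(-K_X) \cdot C = 2$. Writing $-K_X = \Index(X) \cdot D_X$, this singles out curves with $D_X \cdot C = 1$ (lines) when $\Index(X) = 2$, and curves with $D_X \cdot C = 2$ (conics) when $\Index(X) = 1$. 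A classical part of the theory of Fano threefolds of Picard number one (see \cite{IP98}) is that, after passing to a suitable finite extension $K/k$, $X_K$ contains a line (resp.\ conic) defined over $K$ and possessing a $K$-point, hence isomorphic to $\PP^1_K$. Applying Theorem \ref{thm:WA1} to such a saturated curve in $X_K$ then gives the first conclusion.

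For the density part I would treat the four listed cases separately, using in each case the classical description of the Hilbert scheme of saturated curves and showing that after a suitable finite extension $K/k$ their union is Zariski dense in $X_K$. When $(\Index(X), d(X)) = (2, 5)$, the Hilbert scheme of lines on the quintic del Pezzo threefold $V_5$ is isomorphic to $\PP^2$ and the lines sweep out $V_5$, so $K$-lines become Zariski dense over any sufficiently large $K$. When $(\Index(X), d(X)) = (2, 4)$, as in Example \ref{Ex:2_quadrics} the Hilbert scheme of lines is a principal homogeneous space of an abelian surface whose lines sweep out $X$, so potential density for abelian surfaces (cf.\ \cite{Has03}) gives Zariski density of $K$-lines after a further extension. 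When $(\Index(X), g(X)) = (1, 12)$, the scheme of conics on $V_{22}$ contains (by Mukai) a component birational to $\PP^2$ whose conics sweep out $V_{22}$. When $(\Index(X), g(X)) = (1, 10)$ is general, the scheme of conics on $V_{18}$ is a principal homogeneous space of an abelian surface, and again the conics sweep out $V_{18}$, so the same potential-density argument applies. In each case, Zariski density of the union of saturated curves combined with Corollary \ref{cor:WA2} yields the conclusion.

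The main obstacle will be extracting from the classification of Fano threefolds of Picard number one the correct geometric description of the Hilbert scheme of lines or conics in each of the four listed cases, and checking that these families dominate $X$. Once these geometric inputs are in hand, the arithmetic reduces to the (already standard) potential density for abelian surfaces or the obvious density on $\PP^2$, and the analytic input is only Schanuel's theorem on $\PP^1$.
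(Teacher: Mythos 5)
Your proposal is correct and follows essentially the same route as the paper: saturation of lines (index $2$) and conics (index $1$) via Schanuel's theorem on $\PP^1$, existence of such curves after a finite extension from Iskovskikh's classification, and then Zariski density of the relevant family in the four listed cases using the classical descriptions of the line/conic schemes ($\PP^2$ for $(2,5)$ and $(1,12)$, a torsor under an abelian surface for $(2,4)$ and general $(1,10)$) together with potential density for abelian surfaces, feeding into Theorem \ref{thm:WA1} and Corollary \ref{cor:WA2}. The geometric inputs you flag as the remaining work (identification of the Hilbert schemes and dominance of the universal families) are exactly the facts the paper imports from \cite{IP98}, \cite{IM07}, \cite{Fae14} and Example \ref{Ex:2_quadrics}.
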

In particular for the latter Fano threefolds, there is no choice of
open subset $U \subset X$ which is uniform with respect to every height function,
for which Manin's conjecture holds with Peyre's constant.
We shall show in Lemma \ref{lem:3folds_thin}, however, that these results are
still compatible with Conjecture \ref{conj:Manin}.

\begin{remark}
	Let $X$ be a Fano threefold over $\bar{k}$ with the property that $(\Index(X),d(X))=(2,5)$. Such threefolds are 
	equivariant compactifications of a homogeneous space for $\SL_2$ \cite[Thm.~3.4.8]{IP98}.
	Namely, there exists an action of $\SL_2$ on each such Fano variety with an open dense orbit,
	such that the stabiliser of a general point is the binary octahedral subgroup of $\SL_2$.
	
	Manin's conjecture has been proved for many equivariant compactifications of homogeneous spaces
	using the methods of harmonic analysis. In all cases where this approach has been successful however, 
	the stabiliser of a general point has been geometrically connected. Given the presence of a Zariski 
	dense set of saturated subvarieties here, it is not clear that 
	the harmonic analysis approach could be made to apply.
\end{remark}

\subsection{Rational curves on Fano varieties}
For the proof of Theorem \ref{thm:Fano3}
we shall need the following result on the arithmetic of rational curves on Fano varieties.

\begin{lemma} \label{lem:curves}
    Let $X$ be a Fano variety over a number field $k$ and let $C \subset X$ be a non-singular
    rational curve on $X$ defined over $k$ such that $C(k) \neq \emptyset$.
    Then for any choice of anticanonical height function $H$ on $X$ we have
    $$N(C,H,B) \sim c_{C,H}B^{2/r},$$
    where $c_{C,H}>0$ and $r = C \cdot (-K_X) > 0$.
\end{lemma}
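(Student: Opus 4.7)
The plan is to identify $C$ with $\PP^1_k$ and reduce the claim to the adelic form of Schanuel's theorem on the projective line. Since $C$ is a non-singular rational curve over $k$ with $C(k) \neq \emptyset$, we have an isomorphism $C \cong \PP^1_k$. Because $\omega_X^{-1}$ is ample, $r = C \cdot (-K_X)$ is a strictly positive integer (ampleness forces positive intersection with every curve), and the restricted line bundle $\omega_X^{-1}|_C$ has degree $r$ on $\PP^1_k$; hence $\omega_X^{-1}|_C \cong \OO_{\PP^1}(r)$.

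Next I would use functoriality of adelic metrics along the closed immersion $C \hookrightarrow X$: the adelic metric on $\omega_X^{-1}$ defining $H$ restricts to an adelic metric on $\omega_X^{-1}|_C \cong \OO_{\PP^1}(r)$, and the associated height function on $\PP^1(k)$ is exactly $H|_C$. In particular, $H|_C$ is an adelic height on $\PP^1_k$ attached to an ample line bundle of degree $r$.

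It therefore suffices to prove that for any adelic height $H''$ on $\PP^1_k$ associated to an ample line bundle of degree $r$ one has
$$\#\{x \in \PP^1(k) : H''(x) \le B\} \sim c\,B^{2/r}, \quad \text{as } B \to \infty,$$
for some $c > 0$. This is the adelic refinement of Schanuel's theorem \cite{Sch79}, proved by Peyre \cite[Cor.~6.2.18]{Pey95}: for an adelic height on $\OO_{\PP^1}(1)$ the exponent is $2 = 1 + \dim \PP^1$, and passing to the $r$-th tensor power (equivalently, raising the height to its $r$-th power, up to a bounded factor that can be absorbed into the metric) replaces $B$ by $B^{1/r}$, yielding the exponent $2/r$. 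Applying this to $H'' = H|_C$ gives $N(C,H,B) \sim c_{C,H}\,B^{2/r}$ with $c_{C,H}>0$.

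No serious obstacle is expected. The only mildly delicate point is that an adelic metric on $\OO_{\PP^1}(r)$ need not literally be the $r$-th tensor power of one on $\OO_{\PP^1}(1)$, but may differ from such a power at finitely many places; however, Peyre's asymptotic holds for arbitrary adelic metrics, so one may apply it directly to the restriction $H|_C$, with any discrepancy absorbed into the leading constant $c_{C,H}$.
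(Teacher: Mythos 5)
Your proof is correct and follows essentially the same route as the paper: identify $C\cong\PP^1_k$, observe $\omega_X^{-1}|_C\cong\OO_{\PP^1}(r)$, restrict the adelic metric, and invoke Peyre \cite[Cor.~6.2.18]{Pey95} for the asymptotic with exponent $2/r$. The paper's proof is just a more terse statement of the same reduction.
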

\begin{proof}
    As $C(k) \neq \emptyset$ we have $C \cong \PP^1$.
    Moreover by assumption there is an isomorphism $\omega_X^{-1}|_C \cong \OO_{\PP^1}(r)$.
	We are therefore reduced to counting points of bounded height on $\PP^1$,
	for which the desired conclusion follows from \cite[Cor.~6.2.18]{Pey95}.
\end{proof}

Non-singular rational $k$-curves $C$ on such $X$ with $C\cdot(-K_X)=1$ or $2$
are of particular interest to us, as in the first case they are expected to be strongly
accumulating and in the second case they are saturated if $X$ has Picard
number one.
In view of Definition \ref{def:index} we note that
 $C \cdot (-K_X) \geq \Index(X)$ for all curves $C$ on $X$.
In particular if $\Index(X) > 2$, then curves of the above type cannot occur
(which explains why we only consider the case $\Index(X) \leq 2$ in Theorem~\ref{thm:Fano3}).

\subsection{Proof of Theorem \ref{thm:Fano3}}

We now consider Fano threefolds $X$ of Picard number one, paying particular attention to
the saturated subvarieties, following Iskovskikh's classification (see \cite{IP98}).
The classification of Iskovskikh implies that $1 \leq \Index(X) \leq 4$.
We define a line in $X$ to be a curve $L$ of arithmetic genus zero
such that $L \cdot D_X =1$,
and a conic in $X$ to be a curve $C$ of arithmetic genus zero such that $C \cdot D_X=2$.
There is a simple characterisation of such curves if $D_X$ is \emph{very} ample
(we assume this condition from now on).
Namely consider $X \subset \PP^n$ given by an embedding induced by $D_X$
and let $C \subset X$ be a curve on $X$.
If $C \cdot D_X=1$ then $C$ is a line in $\PP^n$ and if
$C \cdot D_X=2$ then $C$ is a conic in $\PP^n$ (see \cite[Ex.~I.7.8]{Har77}).
In particular we see that for such curves, the assumption that they
have arithmetic genus zero is redundant.

\subsection*{$\Index(X) = 4$}
In this case $X \cong \PP^3$. Here
Manin's conjecture with the usual height function is known by Schanuel's theorem
(see \cite[Cor.~6.2.18]{Pey95} for general height functions).

\subsection*{$\Index(X) = 3$}
Here $X$ is a non-singular quadratic hypersurface in $\PP^4$, which is an
example of a flag variety. Manin's conjecture is therefore known by \cite{FMT89}.

\subsection*{$\Index(X) = 2$}
This is the first interesting case. Since $-K_X=2D_X$, there are
no curves $C$ on $X$ such that $C\cdot (-K_X) = 1$. One has $1 \leq d(X) \leq 5$.
In fact we have $d(X) \geq 3$ since $D_X$ is assumed to be very ample.
Lemma \ref{lem:curves} implies that the $k$-lines on $X$ are saturated.
The following cases arise:

\begin{itemize}
	\item $d(X)=3$. These are cubic threefolds in $\PP^4$, as  in Example~\ref{Ex:cubic}.
	\item $d(X)=4$. These are complete intersections of two quadrics in $\PP^5$,
	as in Example \ref{Ex:2_quadrics}.
	\item $d(X)=5$. This is the first new case. Here $X$ is the intersection of a linear subspace of codimension three
	with the Grassmannian $\GG(1,4)$ given by the Pl\"{u}cker embedding in $\PP^9$.
	There is an isomorphism $F_1(X_{\kbar}) \cong \PP^2$ and the lines sweep out all of $X$ (see \cite[Prop.~3.5.6]{IP98}).
	In particular the $k$-lines are Zariski dense in $X$
	as soon as there is a single $k$-line on $X$, and so Corollary \ref{cor:WA2} applies.
\end{itemize}

\subsection*{$\Index(X) = 1$}
Here the fundamental invariant
is the genus $g(X)$, as defined above.
One has $2 \leq g(X) \leq 12$ and $g(X) \neq 11$.
Since $-K_X$ is assumed to be very ample we have
 $g(X)\geq 3$.

By Lemma \ref{lem:curves}, any $k$-line in $X$ is expected to be strongly accumulating.
It is known that $F_1(X)$ is non-empty as a scheme (see \cite[Thm.~4.6.7]{IP98})
and has dimension one (see \cite[Prop.~4.2.2]{IP98}). Thus
the lines sweep out a proper closed subvariety of $X$. Moreover for general $X$,
it is known (see \cite[Thm.~4.2.7]{IP98}) that $F_1(X)$ is a non-singular integral projective
curve whose genus is at least $3$. Hence Faltings's Theorem \cite{faltings} tells us that there
can only be finitely many lines defined over $k$.

Next let $C(X)$ denote the Hilbert scheme of conics on $X$.
By Lemma \ref{lem:curves}, we know that any $k$-conic in $X$ with a rational point is saturated.
The Hilbert scheme of conics on $X$ is non-empty (see \cite[Thm.~4.6.7]{IP98}), 
has dimension $2$ (see \cite[Prop.~4.2.5]{IP98}) 
and every point over $\kbar$ lies on a conic.
The current state of knowledge for $C(X)$ is summarised in work of  Iliev and Manivel \cite[\S 3]{IM07}. For general $X$ we have the following behaviour:

\begin{itemize}
    \item $g(X)=12$. In this case $C(X_{\kbar}) \cong \PP^2$ (in fact this is true for any
    such $X$, not just general $X$, see \cite[Lem.~5]{Fae14}). In particular,
    as soon as $X$ contains a $K$-conic, for a finite field extension $k\subset K$,
    the collection of $K$-conics is Zariski dense.
    \item $g(X)=10$. Here $C(X_{\kbar})$ is an abelian surface,
    so again the $K$-conics are Zariski dense after a finite field extension $k \subset K$.
    \item $g(X)=9$. Here $C(X_{\kbar})$ is a ruled surface over a smooth plane quartic curve.
    Hence Faltings's theorem \cite{faltings} tells us that the rational points on $C(X)$ can never
    be Zariski dense.
    \item $g(X)=8$. Here $C(X_{\kbar})$ is isomorphic to the Fano variety of lines inside a cubic
    threefold. We have already seen in Example \ref{Ex:cubic} that the rational points cannot be Zariski dense here.
    \item $g(X)=7$. Here $C(X_{\kbar}) \cong V^{(2)}$ is the symmetric square of a curve $V$ of genus $7$. The Albanese variety of $V^{(2)}$ is simply the Jacobian $\Jac(V)$ of $V$. Moreover the Albanese map
	$\alpha: V^{(2)} \to \Jac(V)$ is an embedding if $V$ is not hyperelliptic, and contracts a single rational curve $\Gamma$ if $V$ is hyperelliptic
	(see \cite[\S 2.4]{LP12}).
	In either	case the image of $\alpha$ is not a translate of an abelian subvariety, hence the rational points on $C(X)$
	can never be Zariski dense by Faltings's theorem \cite{Fal91}. Moreover if $V$ is neither hyperelliptic nor bielliptic, then $C(X)$ even has only finitely many rational points (see the proof of \cite[Cor.~3]{HS91}).
    \item $3 \leq g(X) \leq 6$. Examples here include quartics in $\PP^4$,
	complete intersections of a quadric and a cubic in $\PP^5$
    and complete intersections of three quadrics in $\PP^6$.
    It is known that $C(X)$ has general type, so the Bombieri--Lang conjecture
    predicts that the rational points on $C(X)$ are never Zariski dense.
    It would be interesting if one could prove that this is indeed the case.
\end{itemize}

This completes the proof of Theorem \ref{thm:Fano3}. \qed

We finish our analysis
by explaining how Theorem \ref{thm:Fano3} is compatible with Conjecture \ref{conj:Manin}.
For simplicity we only consider the case of general Fano threefolds of Picard number one.

\begin{lemma} \label{lem:3folds_thin}
	Let $X$ be as in Theorem \ref{thm:Fano3} and assume that $X$ is general.
	If $X$ has index $2$ (resp.\ $1$) then the collection of rational
	points in $X$ which lie on a $k$-line (resp.\ $k$-conic) is thin.
\end{lemma}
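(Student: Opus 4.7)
The plan is to adapt the proof of Lemma \ref{lem:compatible} to each type of Fano threefold in the Iskovskikh classification with $\Index X \in \{1,2\}$. Let $F$ denote the Hilbert scheme parametrising lines (if $\Index X = 2$) or conics (if $\Index X = 1$) inside $X$, and let $V \subset F \times X$ be the universal family, with projections $\pi_1 : V \to F$ and $\pi_2 : V \to X$. The set of rational points under consideration is precisely $\pi_2(V(k))$. Using the facts recorded during the proof of Theorem \ref{thm:Fano3}, for general $X$ the scheme $F$ is smooth, geometrically integral, and of dimension $2$; every fibre of $\pi_1$ is a smooth rational curve; and lines (resp.\ conics) sweep out $X_{\kbar}$. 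Hence $\dim V = 3 = \dim X$ and $\pi_2$ is dominant and generically finite. By Serre's definition of thinness it then suffices to show that $\deg \pi_2 > 1$.

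For every type in the classification except $(\Index X, d(X)) = (2,5)$ and $(\Index X, g(X)) = (1,12)$, this follows exactly as in Lemma \ref{lem:compatible}. The Hilbert scheme $F$ is non-uniruled in all of these remaining cases: either a surface of general type (e.g.\ on a cubic threefold, or for conics on $X$ of genus $g \in \{3,\ldots,8\}$), a ruled surface over a curve of genus $\geq 1$ (genus $9$), an abelian surface ($(2,4)$ and general $(1,10)$), or a closed subvariety of an abelian variety which is not a translate of an abelian subvariety (genus $7$). Since $V$ dominates $F$ with rational fibres, $V$ itself is not rationally connected. But $X$ is Fano and hence rationally connected by \cite[Cor.~6.2.11]{IP98}, so $V$ cannot be birational to $X$, forcing $\deg \pi_2 > 1$.

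The main obstacle is the two exceptional cases where $F \cong \PP^2$. Here $V$ is a $\PP^1$-bundle over $\PP^2$ and is itself rational, so one cannot distinguish $V$ from $X$ using rational connectivity alone. In these cases I would instead invoke classical enumerative geometry on the specific Fano threefolds involved: on the quintic del Pezzo threefold $V_5$, exactly three lines pass through a general point, whence $\deg \pi_2 = 3$; and an analogous classical count, greater than one, for conics through a general point of the genus $12$ Fano threefold $V_{22}$ gives $\deg \pi_2 > 1$ in that case as well. In both exceptional cases, $\pi_2(V(k))$ is therefore thin, completing the proof.
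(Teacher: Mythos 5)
Your proposal is correct and takes essentially the same approach as the paper: dispose of the non-exceptional cases by observing that $F$ is not rationally connected (hence neither is $V$, whereas $X$ is), and handle $(2,5)$ and $(1,12)$ by directly showing $\deg\pi_2>1$ via enumerative geometry. The only difference is cosmetic: where you assert a ``classical count, greater than one'' for conics through a general point of the genus $12$ threefold, the paper pins this down as degree $6$ by citing the table in Iliev--Manivel.
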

\begin{proof}
	The proof is similar to the proof of Lemma \ref{lem:compatible}.
	Denote by $F$ the Hilbert scheme of lines (resp.\ conics) on $X$ if $X$ has index $2$ (resp.\ $1$).
	Denote by $V \subset F \times X$ the corresponding universal family. If
	$(\Index(X),d(X))\neq(2,5)$ and $(\Index(X),g(X))\neq(1,12)$, then we have seen that $F$
	is not rationally connected. The same argument given in the proof of Lemma \ref{lem:compatible} shows that
	$\pi_2(V(k))$ is thin inside $X(k)$, where $\pi_2$ denotes the second projection.
	To handle the other cases, we note that the degree of $\pi_2$ is simply the number of lines
	(resp.\ conics) through a general point of $X$.
	Hence \cite[Lem.~2.3]{FN89}	implies that $\pi_2$ has degree $3$
	if $(\Index(X),d(X)) = (2,5)$ and
	the table given in \cite[\S 3]{IM07} implies that  $\pi_2$ has degree $6$ if
	$(\Index(X),g(X))=(1,12)$. Thus $\pi_2(V(k))$ is also thin in these cases.
\end{proof}

\subsection{Fano threefolds which are complete intersections}
We close this section by specialising to Fano complete intersections
 $X \subset \PP^n$ of dimension $3$.
Suppose that $X$ is cut out by $s$ hypersurfaces
of degrees $d_1,\dots,d_s\geq 2$ over a number field $k$.
Then a simple calculation shows that $X$
must be of one of the following types:
\begin{enumerate}
	\item a quadric in $\PP^4$,
	\item a cubic in $\PP^4$,	
	\item a quartic in $\PP^4$,	
	\item an intersection of two quadrics in $\PP^5$,
	\item an intersection of a quadric and a cubic in $\PP^5$, or 
	\item an intersection of three quadrics in $\PP^6$.	  
\end{enumerate}
A non-singular quadric in $\PP^4$
is a flag variety, so  Manin's conjecture for (1) follows from  \cite{FMT89}.
As we have  already seen, in cases (2)--(6) the variety contains saturated
subvarieties (at least after a finite field extension). Moreover, in case (4)
the saturated subvarieties can even be Zariski dense. The calculation
of the saturated subvarieties in the remaining cases is itself a non-trivial task,
as one needs to calculate the rational points on a surface of general type.
Establishing an asymptotic formula for
$ N(U,H,B)$, for some open subset $U$ of a Fano threefold of the above type, is likely to be very hard.

\section{A quadric bundle in biprojective space}\label{s:bi}
We now come to an example which was originally suggested by Colliot-Th\'{e}l\`{e}ne (see \cite[Ex.~3.5.3]{BT98}
and \cite[p.~346]{Pey03}). Let $X\subset \PP^3\times\PP^3$ be the quadric bundle given by
\begin{equation} \label{def:bundle}
	x_0y_0^2 + x_1y_1^2 +x_2y_2^2 + x_3y_3^2 = 0,
\end{equation}
viewed over a number field $k$.
Consider the associated counting function $N(U,H,B)$, for some open subset $U\subset X$, where $H$ is an anticanonical height associated to $\omega_X^{-1}=\OO_X(3,2)$.
For $\{i,j,k,\ell\}=\{1,2,3,4\}$,
the subvarieties
$x_i=x_j=x_k=y_\ell=0$ should form strongly accumulating subvarieties and we ask $U$ to avoid them all.
The Lefschetz hyperplane theorem (see \cite[Ex.~3.1.25]{Lar07}) implies that $\Pic (X) \cong \Pic (\PP^3 \times \PP^3) \cong \ZZ^2$,
whence \eqref{con} predicts that
$$
N(U,H,B)\sim c_{U,H} B\log B,  \quad \mbox{as $B\rightarrow \infty$},
$$
for  suitable  $c_{U,H}>0$. In \cite[p.~346]{Pey03}, it was suggested that
this asymptotic formula cannot hold with  Peyre's constant. As an application of Corollary \ref{cor:WA2}, we are
able to show that this is indeed the case.
 
\begin{theorem} \label{thm:bundle}
	Let $X$ be as in \eqref{def:bundle} and let $U \subset X$ be a non-empty open subset. For any $A \geq 0$,
	there exists a choice of anticanonical height function $H_A$ on $X$ such that
	$$
	\liminf_{B \to \infty} \frac{N(U,H_A,B)}{B \log B} > (1 + A) c_{H_A,Peyre}.
	$$
\end{theorem}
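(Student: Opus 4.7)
The plan is to apply Corollary \ref{cor:WA2}. Since $\rho(X) = 2$ (by Lefschetz, as noted before the theorem), the hypothesis of that corollary---namely that the union of the saturated subvarieties of $X$ over $k$ is Zariski dense in $X$---delivers exactly the conclusion of Theorem \ref{thm:bundle}. The task is therefore reduced to exhibiting a Zariski dense family of saturated subvarieties defined over $k$.

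The source of saturated subvarieties will be the fibres of the first projection $\pi_1 : X \to \PP^3_x$. For $x = (x_0:\ldots:x_3) \in \PP^3(k)$ with all $x_i \neq 0$, the fibre $Q_x \subset \PP^3_y$ is a smooth diagonal quadric surface, and an adjunction computation shows $\omega_X^{-1}|_{Q_x} = \OO_{Q_x}(2) = \omega_{Q_x}^{-1}$, so the restriction of any anticanonical height on $X$ is an anticanonical height on $Q_x$. The quadric $Q_x$ is split over $k$ precisely when the discriminant $x_0 x_1 x_2 x_3$ lies in $k^{*2}$; once we further assume $Q_x(k) \neq \emptyset$ we obtain $Q_x \cong \PP^1_k \times \PP^1_k$ over $k$. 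For such $Q_x$ a standard Schanuel-type calculation for the anticanonical height on $\PP^1 \times \PP^1$ gives
$$N(Q_x^\circ, H, B) \sim c_{Q_x^\circ, H}\, B \log B$$
for every non-empty open $Q_x^\circ \subset Q_x$, since the closed complement is at most one-dimensional and contributes only $O(B)$. As this matches the order $B(\log B)^{\rho(X)-1}$, each such $Q_x$ is saturated.

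To produce a Zariski dense collection of such $x$ over $k$, I propose the family
$$T = \{(a^2 : -b^2 : c^2 : -d^2) : a,b,c,d \in k^*\} \subset \PP^3(k).$$
For $x \in T$ the discriminant is $(abcd)^2$, so $Q_x$ splits, and one checks by direct substitution that $(bcd : -acd : abd : abc)$ lies on $Q_x(k)$. Zariski density of $T$ in $\PP^3_k$ is elementary: any $f \in k[X_0,\ldots,X_3]$ vanishing on $T$ yields the polynomial identity $f(a^2,-b^2,c^2,-d^2) \equiv 0$ on $(k^*)^4$, and comparing monomials forces $f = 0$. Consequently $\pi_1^{-1}(T) = \bigcup_{x \in T} Q_x$ is Zariski dense in $X$, and Corollary \ref{cor:WA2} completes the proof.

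I do not anticipate any substantial obstacle here: once the family $T$ and the explicit rational point on each $Q_x$ are written down, the rest is routine. The only mildly delicate point is verifying saturation on \emph{every} non-empty open subset of $Q_x$, but this reduces to Manin's formula for $\PP^1 \times \PP^1$ together with the observation that any closed subset of $Q_x$ of dimension at most one contributes $O(B)$ to the counting function, which is absorbed into the lower-order terms.
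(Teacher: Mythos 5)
Your proposal is correct and follows essentially the same route as the paper: split quadric fibres of $\pi_1$ are saturated, a Zariski dense set of $x$ over $k$ gives split fibres (the paper uses $x_0, x_1, -x_2, -x_3$ all squares; you use the equivalent $(a^2:-b^2:c^2:-d^2)$ with the explicit point $(bcd:-acd:abd:abc)$), and Corollary \ref{cor:WA2} then applies. The only cosmetic difference is that you reduce the split fibre to $\PP^1\times\PP^1$ and invoke a Schanuel-type count directly, whereas the paper cites Franke--Manin--Tschinkel for flag varieties; these amount to the same thing, and your explicit verification of the rational point and of Zariski density of the parameter set merely spells out details the paper leaves implicit.
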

\begin{proof}
	We begin by recalling some basic facts about
	counting rational points on non-singular quadric surfaces $S\subset \PP^3$ defined over a number field $k$.
	We have $\rho(S) \in \{1,2\}$, with $\rho(S) = 2$ if and only if the discriminant
	of the underlying quadratic form is a square in $k$.
	We say that $S$ is {\em split} if $S(k)\neq \emptyset$ and $\rho(S) = 2$.
	Assume that $S(k) \neq \emptyset$ and let $H$ be an anticanonical height function on $S$.
	Then for any non-empty open subset $U \subset S$ we have
	$$
	N(U,H,B)\sim c_{H,\text{Peyre}} B(\log B)^{\rho(S)-1}, \quad \mbox{as $B\rightarrow \infty$}.
	$$
	This is a special case of the work of Franke, Manin and Tschinkel \cite{FMT89} on flag varieties $P\setminus G$,
	with $G$ taken to be the orthogonal group of the associated quadratic form. 

	We now come to the application of Corollary \ref{cor:WA2}.
	One may view $X$  as a family of quadric surfaces $S_x$, for $x\in \PP^3$.
	By the above, we see that the split quadrics contained in $X$ are saturated.
	Moreover, the collection of $x$ for which $S_x$ is split is Zariski dense in $\PP^3$. Indeed, $S_x$ is split if 
	$x_0,x_1,-x_2$ and $-x_3$ are all squares. Hence the split fibres are Zariski dense in $X$ over $k$ and so
	Corollary~\ref{cor:WA2} yields the result.
\end{proof}

Note that the phenomenon which occurs in the proof of Theorem \ref{thm:bundle}
is compatible with Conjecture \ref{conj:Manin}. Indeed, for any split quadric
surface $S_x$ in the family, $x_0x_1x_2x_3$ must be a square. The collection of rational points on $X$
which satisfy this condition is clearly thin.

\end{document}